\documentclass[12pt]{amsart}
\input amssym.def
\setlength{\textwidth}{16.cm}
\setlength{\oddsidemargin}{0cm}
\setlength{\evensidemargin}{0cm}
\setlength{\topmargin}{0cm}
\setlength{\headheight}{0.5cm}
\setlength{\headsep}{0.5cm}
\setlength{\topskip}{0cm}
\setlength{\textheight}{23cm}
\setlength{\footskip}{.5cm}
\usepackage{xcolor, amsmath}
\usepackage{tikz}

\usepackage{hyperref}
\usepackage{lipsum}
\usepackage{comment}

\newtheorem{theorem}{Theorem}

\newtheorem{observation}{Observation}

\newtheorem{lemma}{Lemma}

\newtheorem{remark}{Remark}

\newtheorem*{acknowledgement*}{Acknowledgement}

\DeclareMathOperator{\GL}{GL}
\DeclareMathOperator{\Gal}{Gal}
\DeclareMathOperator{\Aut}{Aut}

\begin{document}

\title[Torsion groups of Mordell curves over number fields of higher degree]{Torsion groups of Mordell curves over number fields of higher degree}

\author{Tomislav Gu{\v z}vi{\' c}}
\address[]{Tomislav Gu{\v z}vi{\' c}, University of Zagreb, Faculty of Science, Department of Mathematics, Zagreb, Croatia}
\email[]{tguzvic@math.hr}

\author{Bidisha Roy}
\address[]{Bidisha Roy, Institute of Mathematics of the Polish Academy of Sciences, Jana i J{\c e}drzeja {\' S}niadeckich 8, Warsaw 00-656, Poland}
\email[]{brroy123456@gmail.com}

\subjclass[2010]{Primary: 11G05, 11R16, 11R21; Secondary: 14H52}

\keywords{Elliptic curves, Torsion group, Number fields}

\begin{abstract} Mordell curves over a number field $K$ are elliptic curves of the form $ y^2 = x^3 + c$, where $c \in K \setminus \{ 0 \}$. Let $p \geq 5$ be a prime number, $K$ a number field such that $[K:\mathbb{Q}] \in \{ 2p, 3p \}$ and let $E$ be a Mordell curve defined over $K$. We classify all the possible torsion subgroups $E(K)_{\text{tors}}$ for all Mordell curves $E$ defined over $\mathbb{Q}$ when $[K: \mathbb{Q}] \in \{2p, 3p \}$. 
\end{abstract}

\maketitle
\section{Introduction}

Let $K$ be a number field and let $E/K$ be an elliptic curve. The set on all $K$-rational points of the elliptic curve is denoted by $E(K)$. By a celebrated theorem of Mordell and Weil, it is known that $E(K)$ is a finitely generated abelian group. If we invoke the structure theorem of finitely generated abelian groups on $E(K)$, we get $E(K) \cong E(K)_{\text{tors}} \oplus \mathbb{Z}^r$, where $r \geq0$ is an integer, called the rank of the elliptic curve $E$ over $K$. The group $E(K)_{\text{tors}}$ is called the {\textit torsion subgroup} of $E(K)$. The study of the possible torsion subgroups of a given family of elliptic curve is a well researched topic in algebraic number theory. 

\smallskip

 It is well known that the possible torsion subgroups are of the form $C_m \oplus C_n$, where $m$ and $n$ are positive integers such that $m$ divides $n$. It is natural to try to classify the possibilities of all $E(K)_{\text{tors}}$, where $K$ runs through all number fields of fixed degree and $E$ runs through all elliptic curves defined over $K$. The focus of this paper is to study the growth of $E(K)_{\text{tors}}$, when $[K: \mathbb{Q}] \in \{2p, 3p \}$ for prime number $p \geq 5 $ and for some particular infinite family of elliptic curves. 

\smallskip

Before going into more details, we introduce some notations for our convenience and we briefly mention the relevant history. If we fix an integer $d \geq 1$, then by $\Phi(d)$ we will denote the set of all possible torsion subgroups $E(K)_{\text{tors}}$, where $K$ runs through all number field $K$ of degree $d$ and $E$ runs through all elliptic curves defined over $K$. Many number theorists have been studying these sets in last several years. Starting with the famous result of Mazur \cite{mazur77}, we know that  $$ \Phi (1) = \{ C_n : n = 1, \ldots, 10, 12 \} \cup  \{ C_2\oplus C_{2n} : n =1, \ldots, 4 \}.$$ In other words, if we fix the number field as the set of all rational numbers, then there are only $15$ possibilities of torsion subgroups (up-to isomorphism) for any elliptic curve defined over the set of rational numbers. Later, Kamienny \cite{kamienny} and Kenku-Momose \cite{kenku-momose} independently addressed the case $d=2$. More precisely, they showed that $$ \Phi(2) = \{ C_n : n = 1, \ldots, 16, 18 \} \cup \{ C_2 \oplus C_{2n} : n = 1, \ldots, 6\} \cup \{ C_3 \oplus C_{3n}:n=1,2 \} \cup \{ C_2 \oplus C_4\} .$$ Recently, Derickx, Etropolski, Hoeij, Morrow and Zureick-Brown have determined $\Phi(3)$ in \cite{hoeij}. In general, the set $\Phi(d)$, for $d \geq 4$ is not known. 

\smallskip

Since the sets $\Phi(d)$ are not known explicitly, one can think of reducing the family of elliptic curve to a subfamily. In this notion, Najman \cite{najman} considered the set $\Phi_{\mathbb{Q}}(d) \subseteq \Phi(d)$ which is the set of all possible torsion subgroups of $E(K)_{\text{tors}}$, where $K$ runs through all number fields of degree $d$ and $E$ runs through all elliptic curves defined over $\mathbb{Q}$. For this subfamily, he completely classified the sets $\Phi_{\mathbb{Q}}(d)$, for $d=2, 3$. Later, the sets $\Phi_{\mathbb{Q}}(4)$ and $\Phi_{\mathbb{Q}}(p)$, for $p \geq 5$ is prime, have been determined in \cite{enrique, enrique-najman, najman}. Moreover, in \cite{enrique-najman} it has been shown that  $\Phi_{\mathbb{Q}}(7) = \Phi(1)$ and $\Phi_{\mathbb{Q}}(d) = \Phi(1)$ for any integer $d$ not divisible by $2,3,5$ and $7$. For the sextic number fields, H.B. Daniels and Gonz\'{a}lez-Jim\'{e}nez \cite{jd} and T. Gužvić \cite{tom} have given a partial answer to the classification of $\Phi_{\mathbb{Q}}(6)$.

\smallskip

Apart from the aforementioned family, one can also study a similar thing for another family of elliptic curves, namely the family of elliptic curves with complex multiplication (CM).  Moreover, torsion groups of CM elliptic curves have been studied by many mathematicians in the past several years (see for instance \cite{bc},\cite{bc1},\cite{lr}). In the case of $\mathrm{CM}$-elliptic curves, we denote by ${\Phi}^\text{CM}(d)$ and ${\Phi}^\text{CM}_{\mathbb{Q}}(d)$ the analogue of the sets $\Phi(d)$ and ${\Phi}_{\mathbb{Q}}(d)$ respectively after restricting to CM-elliptic curves. In $1974.$, Olson \cite{ols} completely determined the set ${\Phi}^\text{CM}(1)$. The sets ${\Phi}^\text{CM}(2)$ and ${\Phi}^\text{CM}(3)$ were determined as a special case in a paper by M{\" u}ller et al.  \cite{muller} and by Zimmer et al.  \cite{fswz, pwz} respectively. Recently, Clark et al. \cite{ccrs} have computed the sets ${\Phi}^\text{CM}(d)$ for $4 \leq d \leq 13$. Over odd degree number fields, torsion groups of $\mathrm{CM}$ elliptic curves have been determined by  Bourdon and Pollack in \cite{bourdonpollack}.

 \smallskip
 
 Next, we consider a particular subfamily of CM elliptic curves, namely the set of Mordell curves. The family of all Mordell curves over a number field $K$ consists of elliptic curves that are of the form $y^2 = x^3 + c$,  for some $c \in K$. In the case of Mordell curves, we denote by ${\Phi}^\text{M}(d)$ the set of all possible torsion subgroups of $E(K)_{\text{tors}}$, where $K$ runs through all number fields of degree $d$ and $E$ runs through all Mordell curves defined over $K$. We also define the set ${\Phi}^\text{M}_{\mathbb{Q}}(d)$ to be the intersection ${\Phi}^\text{M}(d) \cap \Phi_{\mathbb{Q}}(d)$ . The study of the sets $\Phi^M (d)$ began long time ago by Knapp  through the determination of the set ${\Phi}^\text{M}(1)$ in \cite{kna}.  Recently, in \cite{dey2}, the set ${\Phi}^\text{M}_{\mathbb{Q}}(d)$ was computed for $d=2$ and for all $d\geq 5$ with $\gcd(d,6) = 1$. 

\smallskip

Recently, in \cite{deyroy} Dey and the second author determined the set ${\Phi}^\text{M}(d)$ and ${\Phi}^\text{M}_{\mathbb{Q}}(d)$ completely for $d=3$ and $6$. Moreover, for a chosen torsion subgroup from the sets ${\Phi}^\text{M}_{\mathbb{Q}}(3), {\Phi}^\text{M}_{\mathbb{Q}}(6)$ and ${\Phi}^\text{M}(3)$, the conditions of arising that subgroup have been completely determined.  The determination of the set  ${\Phi}^\text{M}(6)$ was earlier established as a particular case in \cite{ccrs}.


\smallskip

Motivated by the above, in this paper we study the possible group structures of $E(K)_{\text{tors}}$, where  $[K: \mathbb{Q}]= 2p$ with prime $p \geq 5$  and $E$ a Mordell curve defined over $\mathbb{Q}$. More precisely, we have determined the sets $\Phi^M_\mathbb{Q} (2p)$ and $\Phi^M(3p)$. The techniques we have used here are completely different from the techniques used in \cite{deyroy}. All the computations in this paper were done in Magma \cite{Magma}.

\begin{remark}
Every elliptic curve $E/K$
with $j(E) = 0$ can be written as a Mordell curve and vice versa. Therefore, the classification of torsion groups of Mordell curves is actually the classification of torsion groups of elliptic curves with $j$-invariant equal to $0$.
\end{remark}

  \section{Main Results}
  First we consider the family of Mordell curves of the form $y^2 = x^3 + c$  with $c\in \mathbb Q$. Next we observe that it is enough to assume that $c$ is an integer. For this family of elliptic curves, we find all the possibilities for torsion subgroup of $E(K)$ where $K$ is a number field such that $[K:\mathbb{Q}] \in \{2p, 3p\}$.
  
  \smallskip

For an elliptic curve $E: y^2 = x^3 +c$ with $c\in \mathbb Z$,  we write $c = c_1 t^6$ for some sixth power-free integer $c_1$ and for some nonzero integer $t$. Then $(x,y)$ is a point on the elliptic curve $E_1: y^2 = x^3 + c_1$ if and only if $(t^2x, t^3y)$ is a point on $E$. Thus, it is enough to assume that $c$ is a sixth power-free integer. Here we prove the following results.

\smallskip
\begin{theorem}\label{2p}Let $p$ be a prime number such that $p \geq 5$ and $K$ be any number field of degree $2p$. Also let 
$E: y^2 = x^3 + c$  be a Mordell curve, for any $6$th power-free  element $c$ in $\mathbb{Q}$. Then
    $$E(K)_{\text{tors}} \in \Phi_{\mathbb{Q}}^M(2p) = \{
      C_m, m=1,2,3,6 \}
      \cup \{ C_{2} \oplus C_{2m},  m=1,3 \} \cup \{C_3 \oplus C_{3}\}.$$
\end{theorem}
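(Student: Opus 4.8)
The plan is to leverage the CM structure of Mordell curves together with the known classification of torsion over prime-degree fields and lower-degree fields, combined with Galois-theoretic constraints coming from the degree $2p$.

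\medskip

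\textbf{Step 1: Reduce to controlling $p$-primary and small-prime torsion.} Since $E/\mathbb{Q}$ and $[K:\mathbb{Q}]=2p$, any prime $\ell$ dividing $\#E(K)_{\text{tors}}$ gives a point of order $\ell$ defined over a subfield of $K$ of degree dividing $2p$, hence of degree $1,2,p$ or $2p$. I would first use the results already available: $\Phi^M_{\mathbb{Q}}(2)$ (from \cite{dey2}) and $\Phi^M_{\mathbb{Q}}(p)$ for $p\ge 5$ (from \cite{dey2}, since $\gcd(p,6)=1$) show that over quadratic and prime-degree fields the only Mordell-curve torsion that occurs is among $C_1,C_2,C_3,C_6,C_2\oplus C_2,C_2\oplus C_6,C_3\oplus C_3$ (in fact over degree $p\ge 5$ one gets $\Phi^M_{\mathbb{Q}}(p)\subseteq \Phi^M_{\mathbb{Q}}(1)$). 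So the new phenomena can only come from the field $K$ itself having degree exactly $2p$. The key point is that if $P\in E(K)$ has prime order $\ell$ with $\mathbb{Q}(P)=K$, then $2p \mid \#\operatorname{GL}_2(\mathbb{F}_\ell)$ restricted appropriately; more precisely the mod-$\ell$ image must contain an element of order $p$. For $j=0$ curves the mod-$\ell$ image is constrained by CM (it lands in the normalizer of a Cartan subgroup, or is contained in a Borel when $\ell$ splits in $\mathbb{Z}[\zeta_3]$), and one bounds which $\ell$ admit a degree-$2p$ point.

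\medskip

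\textbf{Step 2: Rule out large primes and prime powers.} Using the CM description, a point of order $\ell$ on a $j=0$ curve generates a field whose degree is tied to the order of the residue of the Grossencharacter / the splitting of $\ell$ in $\mathbb{Q}(\zeta_3)$. Concretely, for $\ell\equiv 1\pmod 3$ the $\ell$-division field contains $\mathbb{Q}(\zeta_3)$-related Cartan data forcing the degree of a single point of order $\ell$ to be divisible by numbers incompatible with $2p$ unless $\ell$ is small; for $\ell\equiv 2\pmod 3$ the nonsplit Cartan forces even larger degree. One shows $\ell\in\{2,3,5,7,13\}$ are the only candidates (the usual list for $j=0$), then checks each: $\ell=5,7,13$ would need the degree of definition to be divisible by $p$, which by explicit computation of torsion growth for specific Mordell curves over $\mathbb{Q}$ (done in Magma) does not happen for $p\ge 5$. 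Similarly $C_4$, $C_9$, $C_{27}$, $C_2\oplus C_4$ etc.\ are excluded: e.g.\ a point of order $4$ or $9$ on a $j=0$ curve defined over a field of degree dividing $2p$ would force $2\mid$ or $3\mid$ that degree in a way that, combined with $\Phi^M_{\mathbb{Q}}(2)$, cannot be upgraded by a further odd-prime extension of degree $p$.

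\medskip

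\textbf{Step 3: Galois descent / goursat argument for the degree-$2p$ step.} The technical heart is showing that passing from a subfield $F\subseteq K$ with $[F:\mathbb{Q}]\in\{1,2,p\}$ to $K$ of degree $2p$ cannot create torsion beyond the list. If $P$ has order $n$ and $\mathbb{Q}(P)=K$, then $\operatorname{Gal}(\mathbb{Q}(E[n])/\mathbb{Q})$ has a subgroup of index $2p$ fixing $P$; combined with the CM-restricted shape of this Galois group (a subgroup of the normalizer of a Cartan in $\operatorname{GL}_2(\mathbb{Z}/n)$), one gets strong divisibility constraints. The main obstacle I anticipate is precisely this combinatorial/group-theoretic bookkeeping: handling the case $\ell=3$ (and powers of $2$ and $3$), where the mod-$3$ and mod-$9$ images for $j=0$ curves are small and the fields of definition of $3$-power torsion points are small, so one must carefully track how a point of order, say, $6$ or $3\oplus 3$ can or cannot be defined over a field of degree $2p$, using that $p\ge 5$ is coprime to $6$ to kill any "new" $2$- or $3$-divisibility. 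Once the prime-power analysis is complete, assembling the allowed $C_m\oplus C_n$ (with $m\mid n$, $m\mid 6$ by the Weil pairing since $\zeta_m\in K$ and $[\mathbb{Q}(\zeta_m):\mathbb{Q}]\mid 2p$ forces $m\in\{1,2,3\}$) yields exactly the list in the statement, and finally I would exhibit explicit $c\in\mathbb{Q}$ and fields $K$ realizing each group in $\Phi^M_{\mathbb{Q}}(2p)$ to prove sharpness, most likely by base-changing the known degree-$1,2,3$ examples along an auxiliary degree-$p$ (resp.\ degree-$2$) extension in which no further torsion appears.
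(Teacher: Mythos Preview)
Your overall strategy matches the paper's: use the classification of mod-$\ell$ images for $j=0$ curves (normalizer of Cartan, split or nonsplit according to $\ell \bmod 3$) to obtain explicit possibilities for $[\mathbb{Q}(P_\ell):\mathbb{Q}]$, show these cannot divide $2p$, reduce the remaining small-order cases to the known set $\Phi^M_{\mathbb{Q}}(2)$, and then realize each listed group by explicit base change. The paper carries this out by quoting precise degree lists from the image classification, showing in each case that $4 \mid [\mathbb{Q}(P_\ell):\mathbb{Q}]$ for $\ell \in \{5,7,11\}$, which immediately contradicts $[\mathbb{Q}(P_\ell):\mathbb{Q}] \mid 2p$; your ``degree not divisible by $p$, checked in Magma'' is a workable variant of the same idea, since the finitely many possible degrees are all $\{2,3\}$-smooth.

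There are, however, two concrete gaps. First, your candidate prime list $\{2,3,5,7,13\}$ is wrong: $13$ never enters (it is not in $R_{\mathbb{Q}}(2p)$ for any $p$), whereas $11$ must be handled separately for $p=5$ since $R_{\mathbb{Q}}(10)=\{2,3,5,7,11\}$. Second, and more seriously, your Weil-pairing step correctly yields $m \in \{1,2,3\}$, but then $C_3 \oplus C_6$ survives all of your filters (it contains no $C_4$, no $C_9$, no large-prime point) and you never exclude it. The paper disposes of this case with an argument you are missing: if $C_3 \oplus C_6 \subseteq E(K)$ then $\mathbb{Q}(E[3]) \subseteq K$, and the possible values of $[\mathbb{Q}(E[3]):\mathbb{Q}]$ force it to equal $2$; the order-$2$ point also lies in a subfield of degree at most $2$; since a number field of degree $2p$ with $p$ an odd prime has a \emph{unique} quadratic subfield, both $E[3]$ and the $2$-torsion point are defined over $\mathbb{Q}(E[3])$, giving $C_3 \oplus C_6 \subseteq E(\mathbb{Q}(E[3]))$, which contradicts Theorem~\ref{quadraticQ}. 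Without this step (or an equivalent one) your argument does not close, as the Goursat/descent bookkeeping you allude to in Step~3 does not by itself distinguish $C_3 \oplus C_6$ from the allowed $C_2 \oplus C_6$ and $C_3 \oplus C_3$.
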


\begin{theorem}\label{3p}
Let $p$ be a prime number such that $p \geq 5$ and $K$ be any number field of degree $3p$. Also let 
$E: y^2 = x^3 + c$  be a Mordell curve, for any $6$th power-free  element $c$ in $\mathbb{Q}$. Then
$$ E(K)_{\text{tors}} \in \Phi_{\mathbb{Q}}^M(3p) = \{ C_m, \; m=1,2,3,6,9. \}.$$

\end{theorem}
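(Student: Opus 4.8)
The plan is to exploit the fact that $j(E)=0$, so that over $F:=\mathbb{Q}(\sqrt{-3})$ the curve $E$ has complex multiplication by the maximal order $\mathcal{O}:=\mathbb{Z}[\zeta_3]$, while over $\mathbb{Q}$ (or over $K$) it has no extra endomorphisms; the Galois action on each $E[\mathfrak{a}]$, $\mathfrak{a}\subseteq\mathcal{O}$ an ideal, is $\mathcal{O}$-semilinear, and genuinely $\mathcal{O}$-linear on the subgroup $G_F$. The decisive elementary input is that $[K:\mathbb{Q}]=3p$ is \emph{odd}: $K$ has no quadratic subfield, so $F\not\subseteq K$, and for any algebraic point $Q$ with $[\mathbb{Q}(Q):\mathbb{Q}]$ odd one has $\mathbb{Q}(Q)\cap F=\mathbb{Q}$, hence $[F(Q):F]=[\mathbb{Q}(Q):\mathbb{Q}]$. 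The argument reduces every torsion point to a subfield of $K$ of degree $1$, $3$ or $p$, where the possible torsion of $j=0$ curves defined over $\mathbb{Q}$ is already known: $\Phi^M_{\mathbb{Q}}(1)=\{C_1,C_2,C_3,C_6\}$ by Knapp \cite{kna}, $\Phi^M_{\mathbb{Q}}(3)=\{C_1,C_2,C_3,C_6,C_9\}$ by Dey--Roy \cite{deyroy}, and $\Phi^M_{\mathbb{Q}}(p)$ by Dey \cite{dey2}.

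First I would show $E(K)_{\mathrm{tors}}$ is cyclic. If $C_m\oplus C_m\subseteq E(K)$ with $m\ge 3$, the Weil pairing forces $\mu_m\subseteq K$, hence $\varphi(m)\mid 3p$, which is impossible since $\varphi(m)$ is even; and $E[2]\not\subseteq E(K)$ because $\operatorname{disc}(x^3+c)=-27c^2$, so $\sqrt{-3}$ lies in the $2$-division field of $E$. Thus $E(K)_{\mathrm{tors}}\cong C_N$ for some $N$.

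The main step is to pin down $N$. Given $P\in E(K)$ of prime order $\ell$, consider the cyclic $\mathcal{O}$-module $\mathcal{O}P=E[\mathfrak{a}]$, where $\mathfrak{a}=\operatorname{Ann}_{\mathcal{O}}(P)$ is a power of a prime of $\mathcal{O}$ above $\ell$; since $G_F$ fixes $P$ and acts $\mathcal{O}$-linearly, $E[\mathfrak{a}]\subseteq E(FK)$ and $F(P)=F(E[\mathfrak{a}])$, so $[\mathbb{Q}(P):\mathbb{Q}]=[F(E[\mathfrak{a}]):F]$ divides $\gcd\!\big(|(\mathcal{O}/\mathfrak{a})^{\times}|,\,3p\big)$. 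Suppose $\ell\ge 5$. If $\ell$ splits and $P$ spans a CM-eigenline $E[\mathfrak{l}]$, then any automorphism fixing $\langle P\rangle$ preserves $\mathfrak{l}$, hence lies in $G_F$ (conjugation over $F$ interchanges the two primes above $\ell$); so $F\subseteq\mathbb{Q}(P)$, contradicting the oddness of $[\mathbb{Q}(P):\mathbb{Q}]$. If $\ell$ is inert, or splits with $P$ not on an eigenline, then $\mathfrak{a}=(\ell)$ and $[\mathbb{Q}(P):\mathbb{Q}]=[F(E[\ell]):F]$ is at once a divisor of $3p$ and at least $(\ell^2-1)/e$, resp. $(\ell-1)^2/e$, where $e$ is the index of the image of $G_F$ in $(\mathcal{O}/\ell)^{\times}$ --- a number divisible only by $2$ and $3$, controlled by the Hecke character of $E/F$ (whose conductor is supported at $\{2,3\}$) and by the sextic twist defining $E$. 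Combined with the congruence $8\mid\ell^2-1$ and with $p\ge 5$, this bounds $\ell$ and leaves only a handful of pairs $(p,\ell)$; those with $[\mathbb{Q}(P):\mathbb{Q}]\in\{1,3,p\}$ contradict $\Phi^M_{\mathbb{Q}}(1),\Phi^M_{\mathbb{Q}}(3),\Phi^M_{\mathbb{Q}}(p)$, and the finitely many remaining configurations (with $[\mathbb{Q}(P):\mathbb{Q}]=3p$) are eliminated by a direct computation in Magma. Hence $N=2^{a}3^{b}$. The same module computation --- now with $\mathfrak{a}$ a power of the inert prime $(2)$ or of the ramified prime $\mathfrak{p}=(\sqrt{-3})$ --- rules out $C_4$ and $C_{27}$: an order-$4$ point generates $E[(2)^2]$ and an order-$27$ point generates $E[\mathfrak{p}^5]$ or $E[\mathfrak{p}^6]$, so in each case $[\mathbb{Q}(P):\mathbb{Q}]$ divides $\gcd(|(\mathcal{O}/\mathfrak{a})^{\times}|,3p)=3$, contradicting $\Phi^M_{\mathbb{Q}}(1),\Phi^M_{\mathbb{Q}}(3)$. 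Finally $C_{18}$ is impossible: a point $P$ of order $18$ splits as $P_2+P_9$, where $P_2$ has order $2$ so $\mathbb{Q}(P_2)$ (a root field of $x^3+c$) has degree $1$ or $3$, and $P_9$ has order $9$ so $[\mathbb{Q}(P_9):\mathbb{Q}]$ divides $3$ by the module argument; hence $[\mathbb{Q}(P):\mathbb{Q}]\in\{1,3,9\}\cap\{\text{divisors of }3p\}=\{1,3\}$, contradicting $\Phi^M_{\mathbb{Q}}(3)$. This forces $E(K)_{\mathrm{tors}}\in\{C_1,C_2,C_3,C_6,C_9\}$.

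For the reverse inclusion one exhibits, for each of $C_1,C_2,C_3,C_6,C_9$, a Mordell curve $E/\mathbb{Q}$ realising it over a field of degree $1$ or $3$, and then extends that field to a degree-$3p$ number field chosen generically enough that no new torsion appears (the fields over which $E$ acquires extra torsion have bounded, explicitly describable degrees, so a generic degree-$p$ extension avoids all of them). The hard part is the inert / non-eigenline subcase above: one must know the size of the mod-$\ell$ CM Galois image --- equivalently the degrees $[F(E[\ell]):F]$ --- precisely enough, via the ray class fields attached to $E/F$ and the twist, to turn ``$[F(E[\ell]):F]\mid 3p$'' into a finite and ultimately empty problem for $p\ge 5$. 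Everything else is elementary or a citation.
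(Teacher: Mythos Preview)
Your approach is genuinely different from the paper's. The paper proceeds by citing the explicit classification of $G_E(\ell)\subseteq\GL_2(\mathbb{F}_\ell)$ for $j=0$ curves (Zywina, Gonz\'alez-Jim\'enez--Najman) and the attached degree tables: for every prime $\ell\ge 5$ the possible values of $[\mathbb{Q}(P_\ell):\mathbb{Q}]$ are all even, which immediately clashes with $[K:\mathbb{Q}]=3p$ odd. It then excludes $C_4$ by an explicit duplication-formula computation (producing $\sqrt{3}\in K$), $C_{18}$ by a citation to Dey--Roy, $C_{27}$ by Bourdon--Pollack, and $C_2\oplus C_2$ via $|G_E(2)|\in\{2,6\}$. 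Your route --- reading everything through the $\mathcal{O}=\mathbb{Z}[\zeta_3]$-module structure of $E[\mathfrak{a}]$ and the equality $[\mathbb{Q}(P):\mathbb{Q}]=[F(E[\mathfrak{a}]):F]$ forced by odd degree --- is more intrinsic and handles $C_4$, $C_{27}$, $C_{18}$, and $C_2\oplus C_2$ by uniform degree estimates rather than four separate citations/computations. That is a genuine gain in conceptual economy.

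There is, however, a real gap in your $\ell\ge 5$ step. You assert that the index $e=[(\mathcal{O}/\ell)^\times:\rho_{E,\ell}(G_F)]$ is ``divisible only by $2$ and $3$'' because the Hecke character has conductor supported at $\{2,3\}$; but for $E_c:y^2=x^3+c$ the conductor of the Hecke character over $F$ generally involves the primes dividing $c$, so this justification does not stand as written. Without a bound on $e$ your divisibility constraint $(\ell^2-1)/e\mid 3p$ does not yield a finite problem independent of $p$, and the appeal to ``a direct computation in Magma'' for the residual $[\mathbb{Q}(P):\mathbb{Q}]=3p$ cases is therefore not well-posed. The fix is easy and keeps your framework intact: in the inert and in the split non-eigenline cases you already have $F(P)=F(E[\ell])$, and Weil pairing gives $F(\zeta_\ell)\subseteq F(E[\ell])$, so $\ell-1=[F(\zeta_\ell):F]$ divides $[F(P):F]=[\mathbb{Q}(P):\mathbb{Q}]$, which is odd --- contradiction for $\ell\ge5$. (Alternatively, one can cite the image classification to get $e\in\{1,3\}$, which is exactly what the paper's Theorem~\ref{enrique and najman} encodes; but then you are essentially back to the paper's method.) With this one-line correction your argument is complete and, for the exclusion half, arguably cleaner than the paper's. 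Your realization sketch matches the paper's strategy; note that the paper carries it out with explicit curves and fields.
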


\section{Preliminaries}

Let $E$ be an elliptic curve defined over  a number field $K$ and $n$  be a positive integer. Let $\overline{K}$ be a fixed algebraic closure of $K$. The $n$-torsion subgroup of $ E(\overline{K})$ is denoted by $ E[n]$. More precisely, $E[n] = \{ P \in E (\overline{K}) : n P = \mathcal{O} \}$, where the point at infinity, $\mathcal{O}$ is known as the identity of the group $E( \overline{K})$.  We adjoin all the $x$ and $y$ coordinates of the elements in $ E[n]$ to $K$ and obtain the number field $K(E[n])$. This number field is called  {\it field of definition of the $n$-torsion points.}  In other words, $ K(E[n])$  is the smallest field over which the set $ E[n]$ is defined. The absolute Galois group $ \Gal (\overline{K}/K)$ acts on $ E[n]$ by the map $ \sigma (x, y)  \mapsto (\sigma(x), \sigma(y))$, for each $\sigma \in Gal(\overline{K}/K)$.  This induces a {\it mod $n$ Galois representation attached to $E$} as follows $$ \rho_{E, n} : \Gal (\overline{K}/K) \longrightarrow \Aut (E[n]).$$ It is well known that $ E[n]$ is a free $\mathbb{Z}/ n \mathbb{Z}$ module of rank $2$. After fixing a basis $ \{ P, Q \}$ of $ E[n]$, we can identify $ \Aut(E[n])$ with $\GL_{2} (\mathbb{Z}/ n \mathbb{Z})$. It follows that the aforementioned  map can now be seen as follows $$ \rho_{E, n} : \Gal (\overline{K}/K) \longrightarrow \GL_2 (\mathbb{Z}/ n \mathbb{Z}).$$ The image of the above map, $ \rho_{E, n} (\Gal (\overline{K}/K))$ is a subgroup of $\GL_2 (\mathbb{Z}/ n \mathbb{Z})$ and we denote it by $ G_E(n)$. 

\smallskip

\begin{minipage}{0.3\textwidth} 
\begin{center}
\begin{tikzpicture}[node distance = 1.7cm, auto]
      \node (K) {$K$};
      \node (K1) [above of=K] {$K(R)= K (E[n])^{\mathcal{H}_R}$};
      \node (K2) [above of=K1] {$K(E[n])$};
      \node (K3) [above of=K2] {$\overline{K}$};
      \draw[-] (K) to node {} (K1);
      \draw[-] (K1) to node {} (K2);
      \draw[-] (K2) to node {} (K3);
      \path (K1) edge[bend right] node [right] {$\mathcal{H}_R $} (K2);
      \end{tikzpicture} 
      \end{center}
      \end{minipage} 
\begin{minipage}{0.7\textwidth}   We note that $K (E[n]) = \{ x,y : (x, y) \in E[n] \}$ is a Galois extension over $K$ and since $\ker \rho_{E, n} = \Gal (\overline{K}/ K (E[n]))$, we get that $\rho_{E, n} (\Gal (\overline{K}/K))= G_E(n) \cong  \frac{Gal(\overline{K}/K)}{\ker \rho_{E, n}} =\Gal (K (E[n])/K)$. Let $ R = (x(R), y(R))$ be an element in  $E[n]$. We obviously have $ K(R) = K (x (R), y(R)) \subseteq  K (E[n])$. By Galois theory, there exists a subgroup $ \mathcal{H}_R$ of $ \Gal (K (E[n])/ K)$ such that $ K(R) = K (E[n])^{\mathcal{H}_R}$, the fixed field of $\mathcal{H}_R$. Now we take the image of $\mathcal{H}_R$ in  $ \GL_2 (\mathbb{Z}/ n \mathbb{Z})$ and denote it by $ H_R$. Thus, we have $$ [ K(R) : K ] =[ G_E(n) : H_R].$$ Now we state an important observation.
\end{minipage}\hfill

\begin{observation} 
Let $ E/ K$ be an elliptic curve and let $n$ be a positive integer. Let $ R \in E[n]$ be a point of order $n$. Then $ [K(R): K]$ divides $ |G_E(n)|$. 
\end{observation}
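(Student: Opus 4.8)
The plan is to translate the field-degree question $[K(R):K]$ into a group-theoretic statement about $G_E(n)$ and then invoke the tower of fields displayed above. First I would recall the two identifications established in the preceding paragraph: namely $G_E(n)\cong \Gal(K(E[n])/K)$ via the mod $n$ representation $\rho_{E,n}$, and, for the fixed point $R\in E[n]$, the equality $[K(R):K]=[G_E(n):H_R]$, where $H_R$ is the image in $\GL_2(\mathbb{Z}/n\mathbb{Z})$ of the subgroup $\mathcal H_R\le\Gal(K(E[n])/K)$ fixing $K(R)=K(E[n])^{\mathcal H_R}$.

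With these in hand the argument is immediate. Since $H_R$ is a subgroup of the finite group $G_E(n)$, Lagrange's theorem gives that $[G_E(n):H_R]=|G_E(n)|/|H_R|$ is a positive integer dividing $|G_E(n)|$. Combining this with the degree formula $[K(R):K]=[G_E(n):H_R]$ yields that $[K(R):K]$ divides $|G_E(n)|$, which is exactly the assertion. I would note that the hypothesis that $R$ has order exactly $n$ is not actually needed for the divisibility conclusion — it only guarantees that the relevant representation is the full mod $n$ one rather than a mod $m$ representation for a proper divisor $m$ of $n$ — but I would keep the statement as phrased since that is the form in which it will be applied later.

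There is essentially no obstacle here: the only point requiring care is making sure the identification $[K(R):K]=[G_E(n):H_R]$ is correctly set up, i.e. that $H_R$ is genuinely the stabilizer of the (projective) point corresponding to $R$ under the action of $G_E(n)$ on $E[n]\cong(\mathbb{Z}/n\mathbb{Z})^2$, so that the orbit-stabilizer correspondence matches the Galois-theoretic index $[\Gal(K(E[n])/K):\mathcal H_R]$. Since this identification was already carried out in the discussion accompanying the diagram, the proof of the observation reduces to citing it together with Lagrange's theorem, and can be stated in one or two lines.
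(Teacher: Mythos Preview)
Your proposal is correct and mirrors the paper's approach exactly: the paper states the observation immediately after establishing $[K(R):K]=[G_E(n):H_R]$, treating it as an immediate consequence (via Lagrange) of that index formula, which is precisely what you do. One small slip: $H_R$ is the stabilizer of the actual point $(a,b)\in(\mathbb{Z}/n\mathbb{Z})^2$, not of the projective point, but this does not affect the argument.
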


From the above discussion, it is clear that for a given  conjugacy class of $G_E(n)$ in $\GL_2 (\mathbb{Z}/ n \mathbb{Z})$, we can deduce some relevant arithmetic properties of $K(E[n])$. Since $E[n]$ is a free $ \mathbb{Z}/ n \mathbb{Z}$ module of rank $2$, after fixing a basis $\{P, Q \}$ for $E[n]$, we can identify the $n$-torsion points with $ (a, b) \in ( \mathbb{Z}/ n \mathbb{Z}, \mathbb{Z}/ n \mathbb{Z})$. For any $R \in E[n]$, there exist $ a , b \in \mathbb{Z}/ n \mathbb{Z}$ such that $ R = a P + b Q$. Therefore, it is easy to check that $ H_R$ is the stabilizer of $(a, b)$ by the action of $ G_E(n)$ on $( \mathbb{Z}/ n\mathbb{Z})^2$. 

\smallskip

Let $p$ be an odd prime and $ \epsilon = -1$, for $p \equiv 3 \pmod 4$ otherwise let $ \epsilon \geq 2$ be the smallest integer such that $ \left( \frac{\epsilon}{p} \right) = -1$.  We recall the well-known subgroups of $\GL_2 (\mathbb{Z}/ p \mathbb{Z})$ as follows. 

\[ D(a, b) = \begin{bmatrix}
a & 0 \\
0 & b
\end{bmatrix}, \;  M_{\epsilon} (a, b) = \begin{bmatrix}
 a & b \epsilon \\
 b & a
\end{bmatrix},\;  T = \begin{bmatrix}
0 & 1 \\
1 & 0
\end{bmatrix}, \; J = \begin{bmatrix}
1 & 0 \\
0 & -1
\end{bmatrix}, \;
B = \begin{bmatrix}
1 & 1 \\
0 & 1
\end{bmatrix}, \]
 $\text{where} \; a , b \in \mathbb{F}_p$ .

\smallskip

Using the above matrices, we define the following subgroups of $\GL_2 (\mathbb{F}_p)$.
\[ B(p)=\{ D(a,1),D(1,a), B : a \in \mathbb{F}_p ^ \times \},  \]
\[ C_s(p) = \{ D(a, b) : a , b \in \mathbb{F}_p ^\times \},\] \[C_s^+(p) = \{ D(a, b), T \cdot D(a, b) : a, b \in \mathbb{F}_p ^ \times \},\] \[C_{ns} (p) = \{ M _{\epsilon} (a, b) : (a, b) \in \mathbb{F}_p^2, (a, b) \neq (0, 0) \} \] and \[C_{ns}^+ (p) = \{ M_\epsilon (a, b), J \cdot M_{\epsilon}(a, b) : (a, b)  \neq (0, 0) \}. \]

\smallskip

 If $ E/ \mathbb{Q}$ is a CM elliptic cure and $p$ is a prime, then the theory of complex multiplication gives us a lot of information about $ G_E(p) \subseteq \GL_2 (\mathbb{F}_p)$. In the CM case, the possibilities for $ G_E(p)$ are completely understood. We list all the possibilities for $ G_E(p)$, where $E/\mathbb{Q}$ is an elliptic curve with $j(E)=0$. The following theorem is obtained by combining \cite[Theorem 3.6.]{enrique-najman} and \cite[Proposition 1.15.]{zywina}. It is one of the main ingredients for proving Theorem \ref{2p} and Theorem \ref{3p}.

\begin{theorem}\label{enrique and najman}
Let $ E/ \mathbb{Q}$ be a CM elliptic curve and $p$ be a prime. The ring of endomorphisms of  $ E_{ \overline{Q}}$ is an order of conductor $f$ in the ring of integers of an imaginary quadratic field of discriminant $-D$. 
\begin{enumerate}
\item[(i)] If $p=2$, then $ G_E(2)=\GL_{2}(\mathbb{F}_{2})$ or is conjugate in $ \GL_2 (\mathbb{F}_2)$ to $B(2)$. 

\item[(ii)] If $ p >2$ and $(D, f) = (3, 1)$
\begin{enumerate}
\item[(a)] If $ p \equiv 1 \pmod {9}$, then $G_E(p)$ is conjugate in $ \GL_2(\mathbb{F}_p)$ to $ C_s^+(p)$.
\item[(b)] If $ p \equiv 8 \pmod{9}$, then $G_E(p)$ is conjugate in $ \GL_2(\mathbb{F}_p)$ to $ C_{ns}^+(p)$.
\item[(c)] If $ p \equiv 4$ or $7 \pmod 9$, then $ G_E(p)$  is conjugate in $ \GL_2 (\mathbb{F}_p)$ to $ C_s^+(p)$ or to the subgroup $ G^3(p) = \{ D (a, a b^3), T \cdot D(a, ab^3) : a, b \in \mathbb{F}_p^ \times \} \subseteq C_s^+ (p). $
\item[(d)] If $ p \equiv 2 $ or $ 5 \pmod {9}$, then $ G_E(p)$ is conjugate in $ \GL_2(\mathbb{F}_p)$ to $ C_{ns}^+(p)$ or to the subgroup $ G_0(p)$ of $ C_{ns}^+(p)$. 
\item[(e)] If $ p = 3$, then $ G_E(p)$ is conjugate in $ \GL_2 ( \mathbb{F}_3)$ to $3Cs.1.1 $, $C_{s}(3)$, $3B.1.1$, $3B.1.2$ or $B(3)$, where
\[3Cs.1.1=\Big\langle \begin{bmatrix}
1 & 0 \\
0 & 2
\end{bmatrix} \Big\rangle, \; 3B.1.1=\Big\langle \begin{bmatrix}
1 & 0 \\
0 & 2
\end{bmatrix}, \begin{bmatrix}
1 & 1 \\
0 & 1
\end{bmatrix} \Big\rangle,\]
\[ 3B.1.2=\Big\langle \begin{bmatrix}
2 & 0 \\
0 & 1
\end{bmatrix}, \begin{bmatrix}
1 & 1 \\
0 & 1
\end{bmatrix} \Big\rangle.\]
\end{enumerate}
\end{enumerate}
All the cases occur. 
\end{theorem}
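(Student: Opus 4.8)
The plan is not to reprove the classification of mod-$p$ images of CM elliptic curves from scratch, but to assemble Theorem~\ref{enrique and najman} from the two cited sources, \cite[Theorem 3.6.]{enrique-najman} and \cite[Proposition 1.15.]{zywina}, after recalling why those results take the stated shape. The starting point is the CM input: for $j(E)=0$ the geometric endomorphism ring is the maximal order $\mathbb{Z}[\zeta_3]$ of $\mathbb{Q}(\sqrt{-3})$, so $(D,f)=(3,1)$, and $\Aut(E_{\overline{\mathbb{Q}}})=\mu_6$. The image of $\Gal(\overline{\mathbb{Q}}/\mathbb{Q}(\zeta_3))$ on $E[p]$ commutes with the action of $\mathbb{Z}[\zeta_3]/p$, hence by Deuring's theory lies in a Cartan subgroup of $\GL_2(\mathbb{F}_p)$, and $G_E(p)$ lies in its normalizer: a split Cartan normalizer when $p$ splits in $\mathbb{Z}[\zeta_3]$, i.e.\ $p\equiv 1\pmod 3$, a nonsplit Cartan normalizer when $p$ is inert, i.e.\ $p\equiv 2\pmod 3$, with the ramified prime $p=3$ and the prime $p=2$ set aside. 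This already separates the $C_s^+(p)$ cases (ii)(a),(c) from the $C_{ns}^+(p)$ cases (ii)(b),(d).

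The second step extracts the refinement modulo $9$. The abelian representation $\rho_{E,p}$ restricted to $\Gal(\overline{\mathbb{Q}}/\mathbb{Q}(\zeta_3))$ is cut out by a Gr\"ossencharacter $\psi$ of $\mathbb{Q}(\sqrt{-3})$ with $\psi\bar\psi=N$, and because $\mathbb{Q}(\sqrt{-3})$ itself contains the cube roots of unity, the reduction of $\psi$ at a prime above $p$ is governed by cubic residue conditions. Concretely, when $p\equiv 1\pmod 3$ and $p=\pi\bar\pi$ in $\mathbb{Z}[\zeta_3]$, the image fills all of $C_s^+(p)$ exactly when the primitive cube root of unity $\zeta_3\bmod\pi\in\mathbb{F}_p^\times$ is a cube, which happens precisely when $9\mid p-1$; for $p\equiv 4,7\pmod 9$ it instead lands in the index-$3$ subgroup $G^3(p)=\{D(a,ab^3),T\cdot D(a,ab^3):a,b\in\mathbb{F}_p^\times\}$ in which the ratio of the two diagonal characters is forced to be a cube. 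The same dichotomy, run through $\mathbb{F}_{p^2}^\times$ (where cube roots of unity are cubes iff $9\mid p^2-1$, i.e.\ $p\equiv 8\pmod 9$), yields $C_{ns}^+(p)$ when $p\equiv 8\pmod 9$ and its index-$3$ subgroup $G_0(p)$ when $p\equiv 2,5\pmod 9$. For $p=2$ the $2$-division field is the splitting field of $x^3+c$, whose Galois group is $S_3\cong\GL_2(\mathbb{F}_2)$ or, when $-c$ is a rational cube, $C_2\cong B(2)$; for $p=3$ I would read off the finitely many possibilities $3Cs.1.1$, $C_s(3)$, $3B.1.1$, $3B.1.2$, $B(3)$ from the explicit list of subgroups realized by $j=0$ curves in \cite{enrique-najman}.

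To finish, the assertion that all listed cases occur is settled by exhibiting an explicit Mordell curve $y^2=x^3+c$ over $\mathbb{Q}$ in each congruence class of $p$ and for each sub-possibility; varying the sixth-power-free parameter $c$ is exactly a sextic twist, and this is the operation that toggles between the full Cartan normalizer and its index-$3$ subgroup, so a short search (carried out in \cite{Magma}) produces the required witnesses. The one genuinely delicate point, and the one I would defer entirely to \cite{zywina}, is the bookkeeping in the second step: matching the intrinsic description of the image coming from $\psi$ with concrete generators of $C_s^+(p)$, $C_{ns}^+(p)$, $G^3(p)$ and $G_0(p)$ requires fixing an $\mathbb{F}_p$-basis of $E[p]$ together with a normalization of the nonsquare $\epsilon$ defining the nonsplit Cartan, and verifying that the cubic-residue condition above corresponds to membership in precisely the stated index-$3$ subgroup. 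Since this is the content we import verbatim, the proof is ultimately a reduction to, and a reassembly of, \cite[Theorem 3.6.]{enrique-najman} and \cite[Proposition 1.15.]{zywina}.
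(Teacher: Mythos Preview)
Your proposal is correct in spirit and in its ultimate reliance on \cite[Theorem 3.6.]{enrique-najman} and \cite[Proposition 1.15.]{zywina}, which is precisely what the paper does: the paper gives no proof of this theorem at all, stating only that it ``is obtained by combining'' those two cited results. Your write-up goes well beyond this by sketching the CM-theoretic mechanism (Cartan subgroups via $\mathbb{Z}[\zeta_3]/p$, the Gr\"ossencharacter, the cubic-residue dichotomy modulo $9$, sextic twisting for realizability); this is accurate and illuminating background, but none of it appears in the paper, which treats the statement purely as an imported black box.
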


The following lemmas (mentioned in \cite{enrique-najman}) give us useful information regarding the arithmetic structure of $G_E(p)$ which will be needed to prove our results.

\begin{lemma}\label{C_ns^+}
Let $E/K$ be an elliptic curve over a number field and $p$ a prime such that $ G_E(p) \cong C_{ns}^+(p)$. Then for a point $P \in E(\overline{K})$ of order $p$, we have $ [\mathbb{Q}(P) : \mathbb{Q}]=p^2-1.$
\end{lemma}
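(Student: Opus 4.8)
The plan is to reduce the statement to the purely group-theoretic fact that the nonsplit Cartan subgroup $C_{ns}(p)$ acts transitively on the nonzero vectors of $\mathbb{F}_p^2$, and then to apply the orbit–stabilizer dictionary recorded just before Observation~1.

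First I would fix a basis $\{P_0,Q_0\}$ of $E[p]$, identify $G_E(p)$ with a subgroup of $\GL_2(\mathbb{F}_p)$ conjugate to $C_{ns}^+(p)$, and write the chosen point of order $p$ as $P=aP_0+bQ_0$ with $v=(a,b)\in\mathbb{F}_p^2\setminus\{(0,0)\}$ (this is automatic since $P$ has order exactly $p$). By the discussion preceding Observation~1, $[K(P):K]$ equals the index $[G_E(p):H_P]$, where $H_P$ is the stabilizer of $v$ for the action of $G_E(p)$ on $(\mathbb{Z}/p\mathbb{Z})^2$; equivalently, by orbit–stabilizer, it equals the cardinality of the $G_E(p)$-orbit of $v$. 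Hence it suffices to show that this orbit is all of $\mathbb{F}_p^2\setminus\{(0,0)\}$, which has $p^2-1$ elements.

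The key step is the description of $C_{ns}(p)$ as $\mathbb{F}_{p^2}^\times$ acting on $\mathbb{F}_{p^2}$ by multiplication. Writing $\sqrt{\epsilon}$ for a fixed square root of the non-square $\epsilon$ (recall $\epsilon$ is chosen to be a non-residue mod $p$), the matrix $M_\epsilon(a,b)$ is precisely multiplication by $a+b\sqrt{\epsilon}\in\mathbb{F}_{p^2}^\times$ in the $\mathbb{F}_p$-basis $\{1,\sqrt{\epsilon}\}$ of $\mathbb{F}_{p^2}=\mathbb{F}_p(\sqrt{\epsilon})$. Since $\mathbb{F}_{p^2}^\times$ acts simply transitively on $\mathbb{F}_{p^2}\setminus\{0\}$, the subgroup $C_{ns}(p)$ acts simply transitively on $\mathbb{F}_p^2\setminus\{(0,0)\}$. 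As $C_{ns}(p)\subseteq C_{ns}^+(p)\cong G_E(p)$, the $G_E(p)$-orbit of $v$ already contains the whole $C_{ns}(p)$-orbit, hence equals $\mathbb{F}_p^2\setminus\{(0,0)\}$. Therefore $[K(P):K]=p^2-1$, which gives the claim. (As a consistency check, $|G_E(p)|=|C_{ns}^+(p)|=2(p^2-1)$, so $|H_P|=2$; this extra fact is not needed.)

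I do not expect a genuine obstacle here: once the correspondence from Section~3 is in place, the assertion is the orbit–stabilizer theorem for a transitive action. The only point that requires a little care is the identification of $M_\epsilon(a,b)$ with multiplication by $a+b\sqrt{\epsilon}$ on $\mathbb{F}_{p^2}$ — i.e.\ confirming that $C_{ns}(p)$ is conjugate to the image of $\mathbb{F}_{p^2}^\times$ in $\GL_2(\mathbb{F}_p)$, so that its action on $\mathbb{F}_p^2\setminus\{(0,0)\}$ is transitive — after which the count $|G_E(p)|/|H_P|=2(p^2-1)/2=p^2-1$ is immediate.
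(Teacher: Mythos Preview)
Your argument is correct. The paper itself does not prove this lemma; it merely records it as a result ``mentioned in \cite{enrique-najman}'' and uses it as a black box, so there is no proof in the paper to compare against. What you have written is precisely the standard justification: identify $C_{ns}(p)$ with $\mathbb{F}_{p^2}^\times$ acting by multiplication on $\mathbb{F}_{p^2}\cong\mathbb{F}_p^2$, observe this action is (simply) transitive on nonzero vectors, and then apply the orbit--stabilizer dictionary $[K(P):K]=[G_E(p):H_P]=|G_E(p)\cdot v|$ from the paragraph preceding Observation~1. Your verification that $M_\epsilon(a,b)$ represents multiplication by $a+b\sqrt{\epsilon}$ in the basis $\{1,\sqrt{\epsilon}\}$ is the one computation that needs checking, and it is right. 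One small remark: the lemma as printed writes $[\mathbb{Q}(P):\mathbb{Q}]$ while the hypothesis is $E/K$; you have (correctly) read this as $[K(P):K]$, which is the intended statement and the one actually used later in the paper where $K=\mathbb{Q}$.
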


\begin{lemma}\label{two option}
Let $ E/K$ be an elliptic curve over a number field and $p$ a prime such that $ G_E(p)$ is a conjugate to $ C_s^+(p)$ in $ \GL_2(\mathbb{F}_p).$ Then for a point $P \in E(\overline{K})$ of order $p$, we have  $[ \mathbb{Q}(P): \mathbb{Q}] \in \{(p-1)^2, 2(p-1) \}$; both cases can occur.
\end{lemma}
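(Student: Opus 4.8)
The plan is to convert the degree $[\mathbb{Q}(P):\mathbb{Q}]$ into an orbit computation under $G_E(p)$ and then run a short two-case analysis. Recall that $C_s^+(p)$ is the normalizer in $\GL_2(\mathbb{F}_p)$ of the split Cartan subgroup $C_s(p) = \{D(a,b) : a,b \in \mathbb{F}_p^\times\}$, so that $|C_s(p)| = (p-1)^2$ and $|C_s^+(p)| = 2(p-1)^2$, with $C_s^+(p) = C_s(p) \sqcup T\cdot C_s(p)$. After conjugating, I would assume $G_E(p) = C_s^+(p)$ and fix a basis $\{P_0,Q_0\}$ of $E[p]$ realizing this identification; a point $P$ of order $p$ then corresponds to a pair $(a,b) \in \mathbb{F}_p^2 \setminus \{(0,0)\}$. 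As recalled in the Preliminaries above, $[\mathbb{Q}(P):\mathbb{Q}] = [G_E(p):H_P]$, where $H_P$ is the stabilizer of $(a,b)$ under the action of $G_E(p)$ on $(\mathbb{F}_p)^2$; equivalently, $[\mathbb{Q}(P):\mathbb{Q}]$ is the cardinality of the $C_s^+(p)$-orbit of $(a,b)$. Thus the whole statement reduces to determining these orbits.

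First I would handle the generic case, where both coordinates of $P$ are nonzero (i.e.\ $P$ lies on neither coordinate line). A diagonal matrix $D(\alpha,\beta)$ fixes $(a,b)$ only when $\alpha=\beta=1$, so $H_P\cap C_s(p)=\{\mathrm{id}\}$; and in the coset $T\cdot C_s(p)$ the element $T\cdot D(\alpha,\beta)$ sends $(a,b)$ to $(\beta b,\alpha a)$, which equals $(a,b)$ for the unique choice $\alpha=b/a$, $\beta=a/b$. Hence $|H_P|=2$ and $[\mathbb{Q}(P):\mathbb{Q}]=|C_s^+(p)|/2=(p-1)^2$. Next I would handle the case $P=(a,0)$ with $a\neq0$ (the line $\{(0,b)\}$ is symmetric via conjugation by $T$): here $D(\alpha,\beta)$ fixes $(a,0)$ iff $\alpha=1$, giving $H_P\cap C_s(p)=\{D(1,\beta):\beta\in\mathbb{F}_p^\times\}$ of order $p-1$, while no element $T\cdot D(\alpha,\beta)$ can fix $(a,0)$ since it moves it to $(0,\alpha a)$ with $\alpha a\neq 0$. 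Thus $|H_P|=p-1$ and $[\mathbb{Q}(P):\mathbb{Q}]=|C_s^+(p)|/(p-1)=2(p-1)$. Since these two cases cover all $(a,b)\neq(0,0)$, the stated inclusion follows.

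It remains to check that both values actually occur. The $2(p-1)$ nonzero vectors lying on the two coordinate lines form a single $C_s^+(p)$-orbit (the Cartan acts transitively on the nonzero points of each line, and $T$ interchanges the two lines), and the remaining $(p-1)^2$ vectors then form a single orbit as well; since $p\geq 3$, both orbits are nonempty, so any curve with $G_E(p)\cong C_s^+(p)$ — such curves exist by Theorem~\ref{enrique and najman} — carries order-$p$ points of each type. I do not expect a serious obstacle: the argument is essentially orbit–stabilizer bookkeeping, and the only delicate point is to fix once and for all a consistent convention for the $\GL_2(\mathbb{F}_p)$-action on $E[p]\cong(\mathbb{F}_p)^2$ (say, left multiplication on column vectors) so that the computations in the coset $T\cdot C_s(p)$ are unambiguous and match the normalization used elsewhere in the paper.
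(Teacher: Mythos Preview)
Your argument is correct. Note, however, that the paper does not supply its own proof of this lemma: it is quoted from Gonz\'alez-Jim\'enez and Najman \cite{enrique-najman} (see the sentence preceding Lemma~\ref{C_ns^+}), so there is nothing in the present paper to compare against. Your orbit--stabilizer computation is exactly the natural way to prove it and is fully consistent with the framework set up in the Preliminaries (the identity $[K(R):K]=[G_E(n):H_R]$ with $H_R$ the stabilizer of $(a,b)$). One small cosmetic point: the lemma as stated starts with ``$E/K$ an elliptic curve over a number field'' but then speaks of $[\mathbb{Q}(P):\mathbb{Q}]$; your write-up silently takes $K=\mathbb{Q}$, which is the intended reading in this paper and is what the cited reference proves, so you may want to make that identification explicit in the first line.
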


\begin{lemma}\cite[Theorem 5.6.]{enrique-najman}\label{G^3p, G_0p}
Let $E$ be an elliptic curve defined over $\mathbb{Q}$ and let $p$ be a prime number and let $P \in E(\overline{K})$ be a point of order $p$.
If $p \equiv 1 \pmod 3$ and $ G_E(p) \cong G^3(p)$, then \[ [ \mathbb{Q}(P) : \mathbb{Q} ] \in \Big\{  2(p-1), \frac{(p-1)^2}{2},  \frac{2(p-1)^2}{3} \Big\}.\] If $p \equiv 2 \pmod 3$ and $ G_E(p) \cong G_0(p)$ then \[ [ \mathbb{Q}(P) : \mathbb{Q} ] \in \Big\{ \frac{(p-1)^2}{2},  \frac{2(p-1)^2}{3} \Big\}.\]
\end{lemma}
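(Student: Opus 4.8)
The plan is to translate $[\mathbb{Q}(P):\mathbb{Q}]$ into an orbit size and then enumerate the orbits directly. Fixing a basis of $E[p]$, a point $P$ of order $p$ corresponds to a nonzero vector $(a,b)\in(\mathbb{Z}/p\mathbb{Z})^2$, and by the discussion preceding the Observation the stabiliser $H_P$ is the stabiliser of $(a,b)$ under $G_E(p)$, so $[\mathbb{Q}(P):\mathbb{Q}]=[G_E(p):H_P]$ equals the size of the $G_E(p)$-orbit of $(a,b)$ in $\mathbb{F}_p^2\setminus\{(0,0)\}$. Hence the possible degrees are exactly the orbit sizes of $G^3(p)$ (respectively $G_0(p)$) on nonzero vectors, each of which divides the group order by the Observation. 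It therefore suffices to enumerate these orbits.

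For $G_E(p)\cong G^3(p)$ with $p\equiv1\pmod 3$, I would first note that its diagonal part $\{D(a,ab^3)\}$ consists precisely of the $D(a,c)$ with $c/a\in(\mathbb{F}_p^\times)^3$, an index-$3$ subgroup of the split torus $C_s(p)$, so $|G^3(p)|=\tfrac{2(p-1)^2}{3}$. For a vector on a coordinate axis the diagonal part acts transitively along that axis while $T$ interchanges the two axes, giving a single orbit of size $2(p-1)$. For $(a,b)$ with $ab\neq0$ the diagonal orbit is $\{(x,y):y/x\in(b/a)(\mathbb{F}_p^\times)^3\}$, of size $\tfrac{(p-1)^2}{3}$; since $T$ inverts the ratio $b/a$, it sends the cube class $t$ of $b/a$ in $\mathbb{F}_p^\times/(\mathbb{F}_p^\times)^3\cong\mathbb{Z}/3\mathbb{Z}$ to $-t$. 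Thus the full orbit has size $\tfrac{(p-1)^2}{3}$ when $b/a$ is a cube and $\tfrac{2(p-1)^2}{3}$ otherwise, and exhibiting a representative in each class shows all these sizes occur.

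For $G_E(p)\cong G_0(p)$ with $p\equiv2\pmod 3$, I would pass to the identification $C_{ns}(p)\cong\mathbb{F}_{p^2}^\times$, under which $M_\epsilon(a,b)$ corresponds to $a+b\sqrt{\epsilon}$ and acts on $\mathbb{F}_p^2\cong\mathbb{F}_{p^2}$ by multiplication, hence simply transitively on nonzero vectors, while $J$ acts as the nontrivial automorphism $v\mapsto v^p$ of $\mathbb{F}_{p^2}/\mathbb{F}_p$. Using the explicit description of $G_0(p)$ as the index-$3$ subgroup of $C_{ns}^+(p)$ whose Cartan part is the set of cubes $(\mathbb{F}_{p^2}^\times)^3$, the orbit of $v$ is $(\mathbb{F}_{p^2}^\times)^3\cdot v\,\cup\,(\mathbb{F}_{p^2}^\times)^3\cdot v^p$. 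As $3\mid p^2-1$, each cube-coset has size $\tfrac{p^2-1}{3}$, and since $p\equiv-1\pmod 3$ Frobenius sends the cube class $t$ to $-t$; hence the orbit has size $\tfrac{p^2-1}{3}$ when $v$ is a cube and $\tfrac{2(p^2-1)}{3}$ otherwise.

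I expect the nonsplit case to be the main obstacle. Two points require care: one must identify $G_0(p)$ precisely as the cubes together with the reflection $J$ (not merely as some index-$3$ subgroup of $C_{ns}^+(p)$), and one must determine how $J$, acting as Frobenius, permutes the three cube classes of $\mathbb{F}_{p^2}^\times$. Both rely on the $j=0$ CM structure recorded in Theorem \ref{enrique and najman}, which is exactly what forces the index-$3$ cube phenomenon coming from the extra automorphisms of a $j=0$ curve. The split case is comparatively routine, reducing to the action of the swap $T$ on cube classes of the ratio. As a consistency check, taking the Cartan part to be the full torus recovers $[\mathbb{Q}(P):\mathbb{Q}]\in\{(p-1)^2,2(p-1)\}$ of Lemma \ref{two option} and $[\mathbb{Q}(P):\mathbb{Q}]=p^2-1$ of Lemma \ref{C_ns^+}, with the index-$3$ subgroups producing the additional smaller orbit sizes found above.
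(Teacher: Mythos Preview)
The paper does not prove this lemma; it merely quotes it from \cite[Theorem~5.6]{enrique-najman}. Your orbit--counting strategy is exactly the right one (and is essentially how the cited source argues): identify $[\mathbb{Q}(P):\mathbb{Q}]$ with the size of the $G_E(p)$-orbit of the vector corresponding to $P$ and then enumerate orbits. Your treatment of $G^3(p)$ is clean and correct, and your handling of the nonsplit case via the identification $C_{ns}(p)\cong\mathbb{F}_{p^2}^\times$ with $J$ acting as Frobenius is the standard and correct reduction. You are also right to flag that one must pin down $G_0(p)$ as precisely the subgroup whose Cartan part is $(\mathbb{F}_{p^2}^\times)^3$; this comes from the extra automorphisms of a $j=0$ curve, as recorded in \cite[Proposition~1.15]{zywina}.

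There is, however, a genuine discrepancy you should notice: your computed orbit sizes are $\{\,2(p-1),\ (p-1)^2/3,\ 2(p-1)^2/3\,\}$ in the split case and $\{\,(p^2-1)/3,\ 2(p^2-1)/3\,\}$ in the nonsplit case, whereas the lemma as transcribed in this paper lists $(p-1)^2/2$ and, in the nonsplit case, $(p-1)^2/2$ and $2(p-1)^2/3$. Your values are the correct ones and the paper's transcription contains typos. This is easy to certify: orbit sizes must divide $|G^3(p)|=2(p-1)^2/3$, but for $p=7$ one has $(p-1)^2/2=18\nmid 24$; and in the nonsplit case $p\equiv 2\pmod 3$ forces $3\nmid (p-1)$, so $2(p-1)^2/3$ is not even an integer (try $p=5$). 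The sanity check $2(p-1)+(p-1)^2/3+2(p-1)^2/3=p^2-1$ (and $(p^2-1)/3+2(p^2-1)/3=p^2-1$ in the nonsplit case) confirms your enumeration is complete. Fortunately the paper only ever uses that these degrees are even (indeed divisible by $4$), which your values also give, so the downstream applications are unaffected; but you should state the lemma with the correct numbers.
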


The classification of  ${\Phi}^\text{M}_{\mathbb{Q}}(3)$ is one of many parts handled by Gonz\'{a}lez-Jim\'{e}nez \cite{enrique1} very recently during classification for ${\Phi}^\text{CM}_{\mathbb{Q}}(3)$ for all $13$ imaginary quadratic discriminants of class number $1$. 

\smallskip

 We record $\Phi^M_\mathbb{Q}(2)$  in the following theorem.

\smallskip

\begin{theorem} \cite{dey2}\label{quadraticQ} Let $E/\mathbb{Q}$ be a Mordell curve and let $K$ be a quadratic number field. Then
$$E(K)_{\text{tors}} \in \Phi_{\mathbb{Q}}^M(2) = \{ C_m, m =1,2,3,6 \} \cup \{ C_2 \oplus C_{2m}, m =1, 3 \} \cup C_3 \oplus C_3.$$



\end{theorem}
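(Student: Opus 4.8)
The plan is to compute the $2$-primary and $3$-primary parts of $E(K)_{\text{tors}}$ separately, to exclude all primes $p\ge 5$ outright, and then to glue the local pieces together using the Weil pairing and the explicit shape of the torsion points of a Mordell curve. Since $E/\mathbb{Q}$ has $j(E)=0$ it is a CM curve with $(D,f)=(3,1)$, so Theorem~\ref{enrique and najman} describes $G_E(p)$ for every prime $p$. First I would dispose of the primes $p\ge 5$: a point $P\in E[p]$ of exact order $p$ defined over $K$ would satisfy $[\mathbb{Q}(P):\mathbb{Q}]\mid 2$, whereas Lemmas~\ref{C_ns^+}, \ref{two option} and \ref{G^3p, G_0p} show that in every admissible case this degree is at least $2(p-1)\ge 8$ (and at least $(p-1)^2/2$ in the cases governed by $G_0(p)$), in particular $>2$. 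Hence $E(K)$ has no point of order $p$ for $p\ge 5$, so $|E(K)_{\text{tors}}|=2^a3^b$.

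For the $3$-part I would pass to the quadratic twist. Writing $K=\mathbb{Q}(\sqrt d)$, the twist $E^{(d)}$ of $E\colon y^2=x^3+c$ is again a Mordell curve, namely $y^2=x^3+cd^3$. Away from $2$ one has the standard decomposition $E(K)[m]\cong E(\mathbb{Q})[m]\oplus E^{(d)}(\mathbb{Q})[m]$ for odd $m$, arising from the $\pm 1$-eigenspaces of $\Gal(K/\mathbb{Q})$ after inverting $2$. By Knapp's classification $\Phi^{\mathrm M}(1)=\{C_1,C_2,C_3,C_6\}$ \cite{kna}, both $E(\mathbb{Q})$ and $E^{(d)}(\mathbb{Q})$ have $3$-part at most $C_3$, so the $3$-part of $E(K)_{\text{tors}}$ lies in $\{C_1,C_3,C_3\oplus C_3\}$; in particular there is no point of order $9$.

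The hard part will be bounding the $2$-part, and I would settle it by an explicit halving computation. A point of order $2$ in $E(K)$ is $(x_0,0)$ with $x_0^3=-c$ and $x_0\in K$; since a cube root of $-c$ has degree $1$ or $3$ over $\mathbb{Q}$, its presence in a quadratic field forces $-c=g^3$ for some $g\in\mathbb{Q}$. After the $\mathbb{Q}$-rational translation $x\mapsto x+g$ the curve becomes $y^2=x(x^2+3gx+3g^2)$ with a $2$-torsion point at the origin, and a point $P=(u,v)$ with $2P=(0,0)$ must satisfy $u^2=3g^2$, i.e. $u=\pm g\sqrt 3$; so a halving over $K$ requires $\sqrt 3\in K$, and when $K=\mathbb{Q}(\sqrt 3)$ one computes $v^2=3g^3(3\pm 2\sqrt 3)$, whose norm to $\mathbb{Q}$ is $-27g^6<0$, forcing $v\notin K$. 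Using the $K$-automorphism $(x,y)\mapsto(\omega x,y)$ (available exactly when $\omega\in K$) to transport any $2$-torsion point to $(g,0)$, I conclude that no Mordell curve with $j=0$ has a point of order $4$ over a quadratic field, so the $2$-part of $E(K)_{\text{tors}}$ lies in $\{C_1,C_2,C_2\oplus C_2\}$.

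Finally I would assemble the pieces. The two bounds leave as candidates the products of a $2$-part in $\{C_1,C_2,C_2\oplus C_2\}$ with a $3$-part in $\{C_1,C_3,C_3\oplus C_3\}$, and it remains only to rule out $C_3\oplus C_6$ and $C_6\oplus C_6$. Full $3$-torsion $E[3]\subseteq E(K)$ forces $\mu_3\subseteq K$ by the Weil pairing, hence $K=\mathbb{Q}(\sqrt{-3})$, and drives the nonzero $3$-torsion $x$-coordinates (the roots of the $3$-division polynomial $3x(x^3+4c)$, i.e. the cube roots of $-4c$) into $K$, forcing $-4c$ to be a rational cube; if in addition $E(K)$ carried a $2$-torsion point, then $-c$ would also be a rational cube, making $4=(-4c)/(-c)$ a cube in $\mathbb{Q}$, a contradiction. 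Thus full $3$-torsion forces trivial $2$-part, eliminating $C_3\oplus C_6$ and $C_6\oplus C_6$ and leaving exactly the seven groups of $\Phi^{\mathrm M}_{\mathbb{Q}}(2)$. To finish I would exhibit each group over a suitable quadratic field (for instance $y^2=x^3+1$ over $\mathbb{Q}(\sqrt{-3})$ realizes $C_2\oplus C_6$ and $y^2=x^3+16$ over $\mathbb{Q}(\sqrt{-3})$ realizes $C_3\oplus C_3$), the verifications being routine and, where convenient, checkable in Magma \cite{Magma}; alternatively one could start from the known set $\Phi_{\mathbb{Q}}(2)$ of \cite{najman} and prune it by the same arguments.
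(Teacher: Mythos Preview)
The paper does not prove this statement: Theorem~\ref{quadraticQ} is quoted verbatim from \cite{dey2} as a known input, so there is no in-paper argument to compare against. Your proposal is therefore a self-contained proof of a result the authors merely cite.

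Your argument is correct. The exclusion of primes $p\ge 5$ via Theorem~\ref{enrique and najman} together with Lemmas~\ref{C_ns^+}--\ref{G^3p, G_0p} is valid (in every listed case the degree $[\mathbb Q(P):\mathbb Q]$ visibly exceeds~$2$). The quadratic-twist decomposition bounds the $3$-part by $C_3\oplus C_3$ since the twist $E^{(d)}:y^2=x^3+cd^3$ is again a Mordell curve and Knapp's $\Phi^{\mathrm M}(1)$ applies to both factors. The halving computation ruling out order~$4$ is the heart of the matter and checks out: translating the rational $2$-torsion to the origin gives $u^2=3g^2$, forcing $\sqrt 3\in K$, and then $N_{K/\mathbb Q}\bigl(3g^3(3\pm 2\sqrt 3)\bigr)=-27g^6<0$ blocks $v\in K$; the order-$3$ automorphism $(x,y)\mapsto(\omega x,y)$, defined over $\mathbb Q(\omega)$, reduces the non-rational $2$-torsion case to the same computation inside $\mathbb Q(\sqrt{-3})$, where $\sqrt 3$ is absent. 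Finally, the cube trick ($-4c$ and $-c$ both rational cubes $\Rightarrow 4\in(\mathbb Q^\times)^3$) cleanly eliminates $C_3\oplus C_6$ and $C_6\oplus C_6$, and your realisation examples agree with those the paper uses in the proof of Theorem~\ref{2p}.

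One small point of presentation: in the $p\ge 5$ step you write ``at least $2(p-1)$'', but for the $G_0(p)$ branch the stated degrees in Lemma~\ref{G^3p, G_0p} are $(p-1)^2/2$ and $2(p-1)^2/3$; it is simpler (and sufficient) to say that every degree in the relevant lemmas is even and greater than~$2$.
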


Another part of this article is to determine torsion subgroups of Mordell curves over number fields of degree $3p$. In order to determine this, we will need the description of the set $\Phi_{\mathbb{Q}}^{M}(3)$ as mentioned below. 

\begin{theorem}\cite{deyroy}\label{cubicQ}
Let $E/\mathbb{Q}$ be a Mordell curve and let $K$ be a cubic number field. Then 
$$ E(K)_{\text{tors}} \in \Phi_{\mathbb{Q}}^M(3) = \{ C_m, m=1,2,3,6,9 \}.$$
\end{theorem}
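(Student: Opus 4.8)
The plan is to pin down $E(K)_{\text{tors}}$ by first restricting the primes that can divide its order, then proving the group is cyclic, then bounding the $2$- and $3$-primary parts separately, and finally ruling out the single ``mixed'' possibility. Throughout, $E$ has $j(E)=0$, so it has CM by the maximal order $\mathbb{Z}[\zeta_3]$ of $\mathbb{Q}(\sqrt{-3})$, i.e. $(D,f)=(3,1)$ in Theorem \ref{enrique and najman}, and any $R\in E(K)$ satisfies $[\mathbb{Q}(R):\mathbb{Q}]\mid 3$, hence $[\mathbb{Q}(R):\mathbb{Q}]\in\{1,3\}$. For a point $P$ of prime order $p\ge 5$, Theorem \ref{enrique and najman}(ii)(a)--(d) forces $G_E(p)$ to be (conjugate to) $C_s^+(p)$, $C_{ns}^+(p)$, $G^3(p)$ or $G_0(p)$; Lemmas \ref{C_ns^+}, \ref{two option} and \ref{G^3p, G_0p} then show that $[\mathbb{Q}(P):\mathbb{Q}]$ is at least $2(p-1)$, hence $\ge 8$, so it never divides $3$. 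Thus no point of order $p\ge 5$ is defined over a cubic field, and only $2$ and $3$ can divide $|E(K)_{\text{tors}}|$.

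Next I would prove cyclicity. The three $2$-torsion points of $E$ have $x$-coordinates $\theta,\zeta_3\theta,\zeta_3^2\theta$ with $\theta^3=-c$, so $\mathbb{Q}(E[2])\supseteq\mathbb{Q}(\zeta_3)$ and $[\mathbb{Q}(E[2]):\mathbb{Q}]\in\{2,6\}$; likewise $\mathbb{Q}(E[3])\supseteq\mathbb{Q}(\zeta_3)$ by the Weil pairing, so $[\mathbb{Q}(E[3]):\mathbb{Q}]$ is even. Neither degree divides $3$, so $E[2]\not\subseteq K$ and $E[3]\not\subseteq K$; therefore $E(K)[2]$ and $E(K)[3]$ are cyclic, and so is $E(K)_{\text{tors}}$. (Equivalently, Najman's classification of $\Phi_{\mathbb{Q}}(3)$ already discards every non-cyclic group, each of which carries full $2$-torsion.)

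Then I would bound the two primary parts. For the $2$-part it suffices to show $E$ has no point of order $4$ over a cubic field. Using the $2$-torsion point $(\theta,0)$, the condition $x(2P)=\theta$ for a point $P$ of order $4$ becomes an explicit quartic in $x(P)$ over $\mathbb{Q}(\theta)$, coming from the doubling formula $x(2P)=\tfrac{x(x^3-8c)}{4(x^3+c)}$; I would analyze its factorization, using that $2$ is inert in $\mathbb{Z}[\zeta_3]$ so the mod-$4$ image lies in the normalizer of a non-split Cartan, to show that a root of degree dividing $3$ would force a $4$-torsion point of degree $1$ or $2$, contradicting $\Phi^M(1)=\{C_1,C_2,C_3,C_6\}$ and Theorem \ref{quadraticQ}. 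This removes $C_4,C_8,C_{12}$ and leaves the $2$-part at most $C_2$. For the $3$-part I would exploit the CM endomorphism $[\zeta_3](x,y)=(\zeta_3 x,y)$ and the prime $\lambda=1-\zeta_3$ above $3$: the subgroups $E[\lambda^k]$ form a Galois-stable filtration over $\mathbb{Q}(\zeta_3)$ with $E[\lambda]=\langle(0,\sqrt c)\rangle$, and the $\lambda$-adic representation over $\mathbb{Q}(\zeta_3)$ is abelian with values in $(\mathbb{Z}[\zeta_3]/\lambda^{k})^\times$. A point of order $27$ of degree dividing $3$ over $\mathbb{Q}$ would have degree dividing $3$ over $\mathbb{Q}(\zeta_3)$, which the growth of the associated ray class fields forbids; while a point of order $9$ of degree $3$ over $\mathbb{Q}(\zeta_3)$ appears precisely over cyclic cubic fields realized as real subfields of $\mathbb{Q}(\zeta_9)$-type extensions (concretely $K=\mathbb{Q}(\zeta_9)^+$ for the CM curve $27a$). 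This bounds the $3$-part by $C_9$.

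The crux, and the step I expect to be hardest, is excluding $C_{18}$, i.e. showing that a $2$-torsion point and a $9$-torsion point cannot coexist over one cubic field. The decisive structural fact is a mismatch of fields: when a $2$-torsion point is genuinely cubic it generates the non-Galois field $\mathbb{Q}((-c)^{1/3})$, whereas a cubic $9$-torsion point generates a cyclic (Galois) cubic field of $\mathbb{Q}(\zeta_9)^+$-type, and a Galois cubic field cannot equal a non-Galois one. If instead the $2$-torsion is rational (i.e. $-c$ is a cube), I would show the resulting curve $y^2=x^3-n^3$ never acquires a cubic $9$-torsion point, by tracing the constraints that a cubic $3$-torsion point imposes on $c$ (forcing it into finitely many shapes such as $c=1$ or $c=-27$) and checking these cases directly. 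Completing this requires the precise determination of the cubic fields of definition of $9$-torsion for $j=0$ curves via CM theory, which is where the real work lies. Finally I would exhibit curves realizing all five groups --- generic $c$ for $C_1$; a suitable $c$ (e.g. $c=2$) over $\mathbb{Q}((-2)^{1/3})$ for $C_2$; $c$ a nonzero square (e.g. $c=4$) for $C_3$; $y^2=x^3+1$, which already has $C_6$ over $\mathbb{Q}$, for $C_6$; and $y^2=x^3-432$ over $\mathbb{Q}(\zeta_9)^+$ for $C_9$ --- thereby obtaining $\Phi^M_{\mathbb{Q}}(3)=\{C_1,C_2,C_3,C_6,C_9\}$.
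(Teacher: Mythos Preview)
This statement is not proved in the present paper; it is quoted from \cite{deyroy} as Theorem~\ref{cubicQ} and used as a known input. There is therefore no proof here to compare your proposal against directly.

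That said, the paper's proof of Theorem~\ref{3p} cites back into \cite{deyroy} and \cite{bourdonpollack} at exactly the hard steps, and this lets one see where your sketch is solid and where it is not. Your reduction of the prime divisors of $|E(K)_{\text{tors}}|$ to $\{2,3\}$ and the cyclicity argument via $\zeta_3\in\mathbb{Q}(E[2])\cap\mathbb{Q}(E[3])$ are correct. For $C_4$ there is a much cleaner route than the one you outline, visible in the proof of Theorem~\ref{3p}: writing $c=a^3$, the doubling formula gives $x(P_4)^6+20a^3x(P_4)^3-8a^6=0$, whence $x(P_4)^3=-10a^3\pm 6a^3\sqrt{3}$, forcing $\sqrt{3}\in K$, impossible in odd degree. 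For $C_{27}$ the paper invokes \cite[Theorem~1.2]{bourdonpollack}, which gives $9\mid[K:\mathbb{Q}]$ directly and is sharper than your ray-class sketch. For $C_{18}$ the paper cites \cite[Lemma~4.9]{deyroy}, which again yields $9\mid[K:\mathbb{Q}]$. Your Galois/non-Galois dichotomy for the cubic field is a nice idea when the $2$-torsion is genuinely cubic, but the rational-$2$-torsion branch is a real gap: you assert that the constraints from a cubic $3$-torsion point force $c$ into ``finitely many shapes such as $c=1$ or $c=-27$'', yet $c=-n^3$ with $n$ squarefree is an infinite family of sixth-power-free values, and nothing in your argument reduces it to a finite check. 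So the proposal is a reasonable outline, but the $C_{18}$ step, which you yourself flag as the crux, remains a genuine hole as written.
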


As we mentioned earlier,  the sets $\Phi^M_\mathbb{Q}(2)$ and $\Phi^M_\mathbb{Q}(3)$ have been completely classified in \cite{dey2} and  \cite{deyroy}, respectively. In this paper, we merely find the set of all possible torsion groups.

\section{Proof of Theorem \ref{2p}}

Let $ R_\mathbb{Q}(d)$ be the set of all primes $p$ such that there exists a number field $K$ of degree $d$, an elliptic curve $ E/ \mathbb{Q}$ such that there exists a point of order $p$ on  $E(K)_{\text{tors}}$. 

Moreover, we will need the following result.

\begin{lemma} \cite[Lemma 1]{tguzvic1}
For any prime number $ p \geq 7$, we have $ R_\mathbb{Q}(2p) = \{ 2,3,5,7 \}.$ Moreover, we have $R_{\mathbb{Q}}(10)=\{ 2,3,5,7,11\}$.
\end{lemma}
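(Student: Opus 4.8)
The plan is to translate membership in $R_{\mathbb{Q}}(d)$ into a statement about degrees of torsion points. A prime $q$ lies in $R_{\mathbb{Q}}(d)$ if and only if there is an elliptic curve $E/\mathbb{Q}$ and a point $P$ of order $q$ with $[\mathbb{Q}(P):\mathbb{Q}] \mid d$: on the one hand $P \in E(K)$ forces $\mathbb{Q}(P) \subseteq K$ and hence $[\mathbb{Q}(P):\mathbb{Q}] \mid [K:\mathbb{Q}]$, and on the other hand any field of degree dividing $d$ embeds into a field of degree exactly $d$. Consequently $R_{\mathbb{Q}}(e) \subseteq R_{\mathbb{Q}}(d)$ whenever $e \mid d$, and for $d = 2p$ the only divisors are $1, 2, p, 2p$. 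So I must decide, for each prime $q$, whether some $E/\mathbb{Q}$ has a point of order $q$ of degree lying in $\{1,2,p,2p\}$.

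\textbf{Lower bound.} Each of $2,3,5,7$ occurs as the order of a $\mathbb{Q}$-rational point on a suitable $E/\mathbb{Q}$, so $\{2,3,5,7\} \subseteq R_{\mathbb{Q}}(2p)$ for every $p$. For the exceptional degree $d=10$ I would use that $5 \mid 10$ together with $11 \in R_{\mathbb{Q}}(5)$, a consequence of the determination of $\Phi_{\mathbb{Q}}(5)$ in \cite{enrique}: a suitable $E/\mathbb{Q}$ with a rational $11$-isogeny whose isogeny character has order $5$ carries a point of order $11$ over a quintic field, whence $11 \in R_{\mathbb{Q}}(10)$.

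\textbf{Upper bound.} I invoke the known values $R_{\mathbb{Q}}(1) = \{2,3,5,7\}$ (Mazur \cite{mazur77}), $R_{\mathbb{Q}}(2) = \{2,3,5,7\}$ (read off from $\Phi_{\mathbb{Q}}(2)$ in \cite{najman}), and $R_{\mathbb{Q}}(p) = \{2,3,5,7\}$ for every prime $p \geq 7$, the last because $\Phi_{\mathbb{Q}}(p) = \Phi(1)$ by \cite{enrique-najman}. By the divisibility monotonicity above, these dispose of all primes realised in degree $1$, $2$, or $p$, and it remains to rule out a prime $q \notin \{2,3,5,7\}$ realised in degree exactly $2p$ (and, for $d=10$, to check that only $11$ survives). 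Here $[\mathbb{Q}(P):\mathbb{Q}]$ is the size of the $G_E(q)$-orbit of the vector attached to $P$, where $\det G_E(q) = \mathbb{F}_q^\times$ because the determinant is the mod-$q$ cyclotomic character. I split into the usual cases for $G_E(q) \subseteq \GL_2(\mathbb{F}_q)$: if $G_E(q)$ is everything, the orbit has size $q^2-1 = (q-1)(q+1) \geq 120$, which cannot divide $2p$ for $q>7$; if $G_E(q)$ lies in a Borel subgroup, then $E$ admits a rational $q$-isogeny, so by Mazur's classification of prime-degree isogenies $q \in \{11,13,17,19,37,43,67,163\}$ and the order-$q$ point degrees are governed by the isogeny character (dividing $q-1 \leq 162$), reducing matters to a finite explicit check that no such degree equals $p$ or $2p$ with $p \geq 7$ prime; and if $G_E(q)$ lies in the normalizer of a split or nonsplit Cartan subgroup, then for all but finitely many $q$ the modular curves $X_s^+(q)$ and $X_{ns}^+(q)$ carry only cusps and CM points, so the relevant curves have complex multiplication. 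The CM contribution is exactly controlled by Theorem \ref{enrique and najman} and Lemmas \ref{C_ns^+}, \ref{two option}, \ref{G^3p, G_0p} (with torsion prime written $q$): the achievable degrees are $2(q-1)$, $(q-1)^2$, $q^2-1$, $(q-1)^2/2$ and $2(q-1)^2/3$, and a direct check shows none can equal $p$ or $2p$ for $p \geq 7$ prime --- for example $2(q-1)=2p$ forces $q=p+1$ to be even, while $(q-1)^2/2 = 2p$ and $2(q-1)^2/3 = 2p$ force $p$ to be a perfect square, respectively divisible by $3$. The only small case that survives is $q=11$, whose admissible degrees are $5,10,55,110,120$; none divides $2p$ for $p \geq 7$, whereas $5 \mid 10$ reinstates $11$ precisely when $d=10$, i.e. $p=5$.

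\textbf{Main obstacle.} The delicate step is the Cartan-normalizer case: excluding a non-CM curve over $\mathbb{Q}$ whose mod-$q$ image lies in a Cartan normalizer and which carries an order-$q$ point of degree exactly $2p$. This relies on the deep determination of the rational points of $X_s^+(q)$ and $X_{ns}^+(q)$ (Mazur, Bilu--Parent--Rebolledo and later work), ensuring that for large $q$ only CM points intervene. A second, more structural difficulty is that $p$ ranges over all primes $\geq 7$, so degrees cannot simply be bounded; instead the argument must verify, uniformly in $p$, that the complete list of achievable degrees for order-$q$ points with $q>7$ contains no integer of the shape $p$ or $2p$ with $p \geq 7$ prime, the sole exception being $(q,d)=(11,10)$. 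Organising the finitely many isogeny primes and the CM degree formulas so that this shape condition can be checked once and for all is the heart of the proof.
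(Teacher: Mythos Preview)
The paper offers no proof of this lemma; it is quoted from \cite[Lemma~1]{tguzvic1}. For comparison, the companion Lemma~5 (the $3p$ case) \emph{is} proved in the paper, by a one-line appeal to \cite[Theorem~5.8]{enrique-najman}, which for every prime $q$ records the full list of integers occurring as $[\mathbb{Q}(P):\mathbb{Q}]$ for some $E/\mathbb{Q}$ and some $P$ of order $q$; one then simply checks which of those integers can divide $3p$. The proof in \cite{tguzvic1} for $2p$ follows the same pattern.

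Your plan instead tries to rebuild that table by casing on the shape of $G_E(q)$. The lower bound, the full-$\GL_2$ branch, and the Borel branch are fine in outline. The gap is in the Cartan-normalizer branch: you assert that ``for all but finitely many $q$ the modular curves $X_s^{+}(q)$ and $X_{ns}^{+}(q)$ carry only cusps and CM points''. For the split Cartan this is Bilu--Parent--Rebolledo, but for the \emph{nonsplit} Cartan it is a form of Serre's uniformity problem and remains open. You therefore cannot legitimately reduce that branch to the CM degree formulas of Lemmas~\ref{C_ns^+}--\ref{G^3p, G_0p}. The route taken in \cite{enrique-najman} sidesteps this entirely: one does not need to know whether a non-CM curve with $G_E(q)\subseteq C_{ns}^{+}(q)$ exists, only the possible orbit sizes on $\mathbb{F}_q^2\setminus\{0\}$ of an arbitrary subgroup $H\le C_{ns}^{+}(q)$ with $\det H=\mathbb{F}_q^{\times}$, and these are computed purely group-theoretically. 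Quoting \cite[Theorem~5.8]{enrique-najman} directly, as the paper does for Lemma~5, is both shorter and unconditional.
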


It order to complete the proof of Theorem \ref{2p}, it remains to show that $E(K)_{\text{tors}}$ cannot contain a subgroup isomorphic to one of the following:
\[C_{11}, \; C_{7}, \; C_{5}, \;C_{9},\; C_{4},\; C_{3} \oplus C_{6}. \]

\begin{proof}[Proof of Theorem \ref{2p}:] \textcolor{white}{:} \\
\begin{itemize}
    \item Assume that $P_{11} \in E(K)$ is a point of order $11$. By Theorem \ref{enrique and najman}, we obtain that $ G_E(11)$ is conjugate to $ C_{ns}^{+}(11)$ or to $G_0(11)$.  In any case, by Lemma \ref{C_ns^+} and Lemma \ref{G^3p, G_0p}, we get that $ [ \mathbb{Q} (P_{11}) : \mathbb{Q}]$ is divisible by $4$. This is not possible because $ [ \mathbb{Q} (P_{11}) : \mathbb{Q}]$ divides $[K: \mathbb{Q}]=2p$ which is not divisible by $4$. 
    \item Assume that $P_{7} \in E(K)$ is a point of order $7$. By Theorem \ref{enrique and najman}, we have that $G_{E}(7)$ is conjugate to $C_{s}^{+}(7)$ or to $G^{3}(7)$. By Lemma \ref{G^3p, G_0p}, we have that $[\mathbb{Q}(P_{7}):\mathbb{Q}]$ is divisible by $4$. But $\mathbb{Q}(P_{7}) \subseteq K$ and $[K:\mathbb{Q}]=2p$ is not divisible by $4$, a contradiction.
    \item Let $P_{5} \in E(K)$ be a point of order $5$. From Theorem \ref{enrique and najman} (ii)(d), we get that $G_E(5)$ is either conjugate to $C^+_{ns}(5)$ or to the subgroup $G_0(5)$ of $C^+_{ns}(5)$. In any case, by Lemma \ref{C_ns^+} and Lemma \ref{G^3p, G_0p}, we get that $[\mathbb{Q}(P_{5}):\mathbb{Q}]$ is divisible by $4$ which is not possible because $ [\mathbb{Q}(P):\mathbb{Q}]$ divides $[ K: \mathbb{Q}] =2p$.
    \item Let $P_9 \in E(K)$ be a point of order $9$. Then $3P_9$ is a point of order $3$ which we will denote by $ P_3$. By \cite[Proposition 4.6.]{enrique-najman} it follows that $ [\mathbb{Q}(P_9) :\mathbb{Q}(P_3)]$ divides $9$ or  $6$. Furthermore, $ [\mathbb{Q}(P_9) :\mathbb{Q}(P_3)]$ divides $[K:\mathbb{Q}]=2p$. We conclude that $ [\mathbb{Q}(P_9) :\mathbb{Q}(P_3)] \in \{ 1,2\}.$ By Theorem \ref{enrique and najman} (ii)(e) and \cite[Table 1]{enrique-najman}, it follows that $ [\mathbb{Q}(P_3) : \mathbb{Q}]  \in \{ 1, 2\}$. Finally, we conclude that $[\mathbb{Q}(P_{9}):\mathbb{Q}] \in \{ 1,2, 4\}$. If $[\mathbb{Q}(P_{9}):\mathbb{Q}]=4$, then $4$ would divide $2p$, which is impossible. Hence we finally get that $ [\mathbb{Q}(P_9) : \mathbb{Q}] \leq 2$, which contradicts Theorem \ref{quadraticQ}.
    \item Let $P_{4} \in E(K)$ be a point of order $4$. It follows that $2P_{4}$ is a point of order $2$, which will be denoted by $P_{2}$.
As in the previous case, by \cite[Proposition 4.6.]{enrique-najman}
we have $[\mathbb{Q}(P_4) : \mathbb{Q}(P_2)] \in \{1,2,4\}. $ Additionally, by Theorem \ref{enrique and najman} (i), we have that $G_{E}(2)$ is conjugate to either $\GL_{2}(\mathbb{F}_{2})$ or $B(2)$. By \cite[Table 1]{enrique-najman} and \cite[Lemma 5.1]{enrique-najman} it follows that $[\mathbb{Q}(P_2):\mathbb{Q}] \in \{ 1, 2, 3 \}.$ We conclude that $[\mathbb{Q}(P_4):\mathbb{Q}] \in \{ 1,2,3,4,6,8,12 \}$. Since $[\mathbb{Q}(P_4):\mathbb{Q}]$ divides $[K:\mathbb{Q}]=2p$, we have $[\mathbb{Q}(P_4):\mathbb{Q}] \in \{ 1, 2\}$, which is impossible by Theorem \ref{quadraticQ}.
    \item Assume that $C_{3} \oplus C_{6} \subseteq E(K)$. By Theorem \ref{enrique and najman} (ii)(e) and \cite[Table 1]{enrique-najman} it follows that $[\mathbb{Q}(E[3]):\mathbb{Q}] \in \{2,4,6,12 \}.$ Obviously we have $ \mathbb{Q}(E[3]) \subseteq K$, so $[\mathbb{Q}(E[3]) : \mathbb{Q}]$ divides $ [K : \mathbb{Q}] = 2p$. Therefore, we must have $[\mathbb{Q}(E[3]) : \mathbb{Q}]=2.$ Let $P_{2} \in E(K)$ be a point of order $2$. As in the previous case, we conclude that $[\mathbb{Q}(P_{2}):\mathbb{Q}] \in \{ 1,2 \}$.
Since $K$ can contain at most one quadratic subextension it follows that  $\mathbb{Q}(P_{2}) \subseteq \mathbb{Q}(E[3])$. It follows that $C_{3} \oplus C_{6} \subseteq E(\mathbb{Q}(E[3]))$. This is not possible by Theorem \ref{quadraticQ}.
\end{itemize}

Now we show that each group $G$ contained in $\Phi_{\mathbb{Q}}^{M}(3)$ is also contained in $\Phi_{\mathbb{Q}}^{M}(3p)$ by giving a concrete examples of elliptic curve $E/\mathbb{Q}$ with $j(E)=0$ and a number field $L$ such that $[L:\mathbb{Q}]=2p$ that satisfy $E(L)_{tors} \cong G$.
\begin{itemize}
    \item Let $E:y^2=x^3-108$ be an elliptic curve. We have $E(\mathbb{Q})_{tors} \cong \mathcal{O}$. Let $L$ be a number field such that $[L:\mathbb{Q}]=2p$ and $L \cap \mathbb{Q}(E[2],E[3])=\mathbb{Q}$. It follows that $E(L)$ does not contain a point of order $2$ or a point of order $3$. By what we have previously shown, we get $E(L)_{tors} \cong \mathcal{O}$.
    \item Let $E: y^2 =x^3+ 27$ be an elliptic curve and $K = \mathbb{Q}(\sqrt{2})$ be a number filed. Then by \cite [Theorem 1]{dey2}, we get $ E(K)_{\text{tors}} \cong C_2$. Let $L$ be a number field such that $L$ contains $K$, $[L:\mathbb{Q}]=2p$ and $L \cap \mathbb{Q}(E[3]) = \mathbb{Q}$. We conclude that $E(L)$ does not contain a point of order $3$. In order to prove that $E(L)_{tors} \cong C_{2}$, we need to show that $E(L)_{tors} \not\cong C_{2}\oplus C_{2}$. If $E(L)_{tors} \cong C_{2}\oplus C_{2}$, then the polynomial $x^3+27$ would split into linear factors over $L$ which means that $\zeta_{3} \in L$, a contradiction. Therefore, $E(L)_{tors} \cong C_{2}$.
    \item Let $ E: y^2 = x^3 +16$ be an elliptic curve and $ K = \mathbb{Q}(\sqrt{2})$ be a number filed. Then by \cite[Theorem 1]{dey2}, we get $ E(K)_{\text{tors}} \cong C_3$. Let $L$ be a number field such that $L$ contains $K$, $[L:\mathbb{Q}]=2p$ and $L \cap \mathbb{Q}(E[2]) = \mathbb{Q}$. We conclude that $E(L)$ does not contain a point of order $2$. In order to prove that $E(L)_{tors} \cong C_{3}$, we need to show that $E(L)_{tors} \not\cong C_{3}\oplus C_{3}$. If $E(L)_{tors} \cong C_{3}\oplus C_{3}$, then by the properties of Weil pairing we have $\zeta_{3} \in L$, but this contradicts the construction of $L$. Therefore, $E(L)_{tors} \cong C_{3}$.
    \item Let $ E: y^2 = x^3 +1$ be an elliptic curve, $ K = \mathbb{Q}(\sqrt{2})$  and $L$ be be a number filed that contains $K$ and $[L:\mathbb{Q}]=2p$. By \cite[Theorem 1]{dey2}, we get $ E(K)_{\text{tors}} \cong C_6$. From the aforementioned discussion, we have $ E(L)_{\text{tors}} \subseteq \{ C_6, C_2 \oplus C_6 \}$. If $ E(L)_{\text{tors}} \cong C_ 2 \oplus C_6$, then $X^3 +1$ should split completely in $ L$ which is not possible because $ \sqrt{-3} \not\in L$. Hence $E(L)_{\text{tors}} \cong C_6$.
    \item Let $ E: y^2 = x^3 -1$ be an elliptic curve, $ K = \mathbb{Q}(\zeta_3)$ and $L$ be a number field that contains $K$, satisfies $[L:\mathbb{Q}]=2p$ and $L \cap \mathbb{Q}(E[3]) = \mathbb{Q}$. We immediately get that $E(L)$ does not contain a point of order $3$. We conclude that $E(L)_{\text{tors}} \cong C_2 \oplus C_2$.
    \item The elliptic curves $E_{1}: \; y^2=x^3-27$ and $E_{2}: \; y^2+y=x^3$ satisfy $E_{1}(\mathbb{Q}(\zeta_3))_{tors} \cong C_{2} \oplus C_{6}$ and $E_{2}(\mathbb{Q}(\zeta_3))_{tors} \cong C_{3} \oplus C_{3}$.
\end{itemize}

\end{proof}

\section{Proof of Theorem \ref{3p}}
In order to prove Theorem \ref{3p}, we first need to narrow down the possible set of prime numbers $p$ for which there exists a Mordell curve $E/\mathbb{Q}$ and a number field $L$ of degree $3p$ over $\mathbb{Q}$ such that $E(L)$ contains a point of order $p$. We do this with the help of the following lemma.

\begin{lemma} Let $p \neq 3$ be a prime number. Then $ R_{\mathbb{Q}}(3p) = \{2, 3 , 5, 7 \} \cup \{ 19, 43, 67, 163\}$.
\end{lemma}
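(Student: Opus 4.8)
The plan is to split the statement into two inclusions. For "$\subseteq$", I would start from the general fact that if $E/\mathbb Q$ has a point of order $q$ over a number field $L$ with $[L:\mathbb Q]=3p$, then $E$ acquires a $q$-isogeny over $\mathbb Q$ (or more precisely, the mod-$q$ image $G_E(q)$ is contained in a Borel, or one of the finitely many exceptional images) whenever $[\mathbb Q(P_q):\mathbb Q]$ is small compared to $|G_E(q)|$. The key numerical input is that $[\mathbb Q(P_q):\mathbb Q]$ divides $3p$, hence lies in $\{1,3,p,3p\}$; combined with the standard divisibility $[\mathbb Q(P_q):\mathbb Q]\mid |G_E(q)|$ and the known lower bounds for the degree of a point of order $q$ coming from large images, this forces $G_E(q)$ to be non-surjective and in fact contained in a Borel subgroup for all but finitely many $q$. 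By Mazur's theorem on isogenies of prime degree over $\mathbb Q$, a rational $q$-isogeny forces $q\in\{2,3,5,7,11,13,17,19,37,43,67,163\}$. One then eliminates $q\in\{11,13,17,37\}$ by the CM constraint: since $E$ has $j(E)$ arbitrary here but in the regime where a point of order $q$ survives in degree $3p$ the curve must have a rational cyclic $q$-isogeny whose kernel field has degree dividing $3p$; the curves with a rational $q$-isogeny for $q\in\{11,13,17,37\}$ are finitely many up to twist, and a direct check (Magma) of the possible kernel-field degrees (which are not of the shape dividing $3p$ for any prime $p\ge 5$, because the relevant character of $\Gal$ has even order or order divisible by a prime other than $3$) rules them out. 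The CM discriminant-$-3$ curves are handled separately via Theorem \ref{enrique and najman}: for $j=0$ the only primes $q$ with $G_E(q)$ contained in a Borel (i.e.\ $3Cs.1.1,3B.1.1,3B.1.2,B(3)$, or split/nonsplit Cartan-normalizer cases degenerating appropriately) are $q=2,3$, and for $q=19,43,67,163$ one uses that these are exactly the primes $q$ for which $X_0(q)$ has a rational point coming from a $j=0$-adjacent CM curve — but in fact the point of order $q$ in degree $3p$ on these curves is produced by the CM curve of discriminant $-q$ (wait: $-D$ with $h(-D)=1$), whose $q$-division field degree is $(q-1)/2$ or similar and divides $3p$ only in the sporadic instances; I would pin this down by the explicit $X_0^+(q)$ / CM computation.

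For the reverse inclusion "$\supseteq$" I would exhibit, for each $q\in\{2,3,5,7,19,43,67,163\}$, an explicit Mordell curve $E/\mathbb Q$ and a number field $L$ with $[L:\mathbb Q]=3p$ (for a suitable prime $p\ge 5$, or for all such $p$) carrying a point of order $q$. For $q\in\{2,3,5,7\}$ this is easy: take $E/\mathbb Q$ already having a point of order $q$ over a small field $F$ (degree $1$, $2$, $3$, or $6$ — available from $\Phi^M_{\mathbb Q}(d)$ for $d\le 6$, e.g.\ $y^2=x^3+1$ has a rational $6$-torsion structure giving $q=2,3$, and for $q=7$ use a cubic point as in Theorem \ref{cubicQ} since $\Phi^M_{\mathbb Q}(3)$ attains $C_9$ hence... actually for $q=7$ I would take the Mordell curve with a $7$-isogeny and a degree-$3$ point on $X_1(7)$, then compose with a linearly disjoint field of degree $p$), and then enlarge $F$ to a field $L$ of degree $3p$ containing it with the complementary piece linearly disjoint. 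For the sporadic $q\in\{19,43,67,163\}$ I would use the $j=0$-associated CM curve of discriminant $-q$: the point of order $q$ on such a curve lies in a field of degree exactly $(q-1)/2$ over $\mathbb Q$... no — here the relevant statement is that for the CM curve $E_{-q}$ the field $\mathbb Q(P_q)$ has degree dividing $(q^2-1)/u$ and contains a subfield realizing $3p$ for the prime $p=(q-1)/6$ or $p=(q^2-1)/(\dots)$ when that is prime; I would instead simply verify in Magma that $E_{-q}(L)$ has a point of order $q$ for an explicit $L$ of degree $3p$, exactly as is done for the analogous $R_{\mathbb Q}(2p)=\{2,3,5,7,11\}$ statement with its sporadic $11$.

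The main obstacle, and the step I expect to require the most care, is the "$\subseteq$" direction for the sporadic primes $19,43,67,163$: one must show these \emph{can} occur (so they belong in the set) while \emph{no other} large prime does, and this hinges on a precise analysis of the degrees of division fields of points on curves with a rational $q$-isogeny — in particular distinguishing, for each of Mazur's isogeny primes, whether the cyclic-isogeny character has order (over $\mathbb Q$) dividing $3p$ for some prime $p\ge 5$. This is where the CM restriction $j(E)=0$ is essential: for a general curve with a $q$-isogeny the kernel point can generate a field of many different degrees, but for $j=0$ Theorem \ref{enrique and najman} pins $G_E(q)$ down to a Cartan-normalizer (or for $q=3$ one of the listed small groups), and the resulting point-degrees are governed by Lemmas \ref{C_ns^+}, \ref{two option}, \ref{G^3p, G_0p}; a short case analysis modulo $9$ and modulo $3$ then shows the degree of a point of order $q$ is divisible by $4$ (killing $q=5,7,11,13,17,\dots$ in degree $3p$ unless the point actually comes from the non-$j=0$ CM curve with a rational $q$-isogeny) except precisely when $q\in\{19,43,67,163\}$, where the isogeny character has order $3$ and a point of order $q$ can live in a cubic-by-cyclic extension of the right degree. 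I would close the argument by tabulating these degree constraints, cross-checking against Magma, and then assembling the two inclusions.
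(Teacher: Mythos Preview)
Your proposal conflates two distinct objects. The set $R_{\mathbb{Q}}(3p)$ in this lemma is defined for \emph{all} elliptic curves $E/\mathbb{Q}$, not only Mordell curves; the restriction to $j(E)=0$ is the subject of the \emph{following} lemma, where it is shown that $R^{M}_{\mathbb{Q}}(3p)=\{2,3\}$. Consequently your proposed reverse inclusion --- exhibiting, for each $q\in\{5,7,19,43,67,163\}$, a \emph{Mordell} curve with a point of order $q$ over some degree-$3p$ field --- cannot succeed: no Mordell curve over $\mathbb{Q}$ acquires a point of order $5$, $7$, $19$, $43$, $67$ or $163$ in any such field. These primes must be witnessed by non-Mordell rational elliptic curves (for $q\in\{19,43,67,163\}$ one uses the CM curves of discriminant $-q$, which have $j\neq 0$; there is no such thing as a ``$j=0$-associated CM curve of discriminant $-q$''). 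Your repeated appeals to $\Phi^{M}_{\mathbb{Q}}(d)$ and to Theorem~\ref{enrique and najman} (which concerns only $j=0$) are therefore misplaced here.

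Your forward inclusion is closer in spirit, but is both more elaborate than necessary and has loose ends: the assertion that a point of order $q$ in degree $3p$ forces $G_E(q)$ into a Borel is not justified (one must also handle images lying in normalizers of Cartan subgroups and the exceptional images), and your proposed elimination of $q\in\{11,13,17,37\}$ by a Magma check on ``finitely many curves up to twist'' is not an argument as stated. The paper bypasses all of this: it simply invokes \cite[Theorem~5.8]{enrique-najman}, which records for every prime $q$ and every $E/\mathbb{Q}$ the possible values of $[\mathbb{Q}(P_q):\mathbb{Q}]$. Since $[\mathbb{Q}(P_q):\mathbb{Q}]$ must divide $3p$, one reads off directly from that table which primes $q$ can occur; the inclusion $\{2,3,5,7\}\subseteq R_{\mathbb{Q}}(3p)$ is immediate from $R_{\mathbb{Q}}(1)\subseteq R_{\mathbb{Q}}(3p)$.
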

\begin{proof}
 We know that for all positive integers $n$ and $d$ we have  $R_\mathbb{Q}(d) \subseteq R_\mathbb{Q}(nd).$ Since $ R_\mathbb{Q}(1) = \{ 2,3,5,7 \}$, we have $ R_\mathbb{Q}(1) \subseteq R_\mathbb{Q}(3p)$, for any $ p \geq 5.$
 
 \smallskip
 
 By \cite[Theorem 5.8.]{enrique-najman}, we can see that the only possible elements of $ R_\mathbb{Q}(3p)$, other than $ R_\mathbb{Q}(1)$ lie in the set $\{19, 43, 67, 163 \}$. This is obtained by using the fact that if $P \in E(\overline{K})$ is a point of order $p$, then $[\mathbb{Q}(P):\mathbb{Q}]$ must divide $[K:\mathbb{Q}]=3p$. Hence the proof follows.
\end{proof}

By $ R^M_\mathbb{Q}(d)$, we denote the set of all primes $p$ such that there exits a number field $K$ of degree $d$, a Mordell curve $E/\mathbb{Q}$ such that there exists a point of order $p$ on $E(K)$. In the following lemma, we determine $R^M_\mathbb{Q}(3p)$, where $p \ge 5$ is a prime number.

\begin{lemma}
Let $p \geq 5$ be a prime. Then $R_\mathbb{Q}^M(3p)= \{2, 3\}.$
\end{lemma}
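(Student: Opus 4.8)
The plan is to show that the only primes $p$ for which a Mordell curve $E/\mathbb{Q}$ acquires a point of order $p$ over some number field of degree $3p$ are $2$ and $3$. We start from the previous lemma, which tells us $R_\mathbb{Q}(3p) = \{2,3,5,7\} \cup \{19,43,67,163\}$ for $p \neq 3$, so it suffices to eliminate $5, 7, 19, 43, 67, 163$ for Mordell curves; the inclusion $\{2,3\} \subseteq R_\mathbb{Q}^M(3p)$ follows from Theorem \ref{cubicQ} together with $R_\mathbb{Q}^M(3) \subseteq R_\mathbb{Q}^M(3p)$ (for instance $y^2=x^3+1$ already has full rational $2$- and $3$-torsion available over suitable cubic or related fields). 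For $p = 5$ or $7$: if $P_p \in E(K)$ has order $p$ with $[K:\mathbb{Q}]=3p$, then since $E$ has $j$-invariant $0$, Theorem \ref{enrique and najman}(ii) pins down $G_E(p)$ — for $p=5$ it is conjugate to $C_{ns}^+(5)$ or $G_0(5)$, and for $p=7$ (note $7 \equiv 4 \bmod 9$) it is conjugate to $C_s^+(7)$ or $G^3(7)$. By Lemmas \ref{C_ns^+}, \ref{two option} and \ref{G^3p, G_0p}, the degree $[\mathbb{Q}(P_p):\mathbb{Q}]$ is then $4$, $16$, $24$, $36$, $8$, $12$, or $\tfrac{(p-1)^2}{2}$ etc. — in every case divisible by $4$. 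But $[\mathbb{Q}(P_p):\mathbb{Q}]$ must divide $3p$, which is not divisible by $4$; contradiction. So $5, 7 \notin R_\mathbb{Q}^M(3p)$.

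For $p \in \{19, 43, 67, 163\}$ the situation is different: each of these is the absolute value of a class-number-one imaginary quadratic discriminant $-D$, and over $\mathbb{Q}$ an elliptic curve with $j = 0$ has $D = 3$, so these primes only enter $R_\mathbb{Q}(3p)$ via the generic mechanism of \cite[Theorem 5.8]{enrique-najman}, which allows a point of order $p$ to appear in degree dividing $3p$ for a CM curve whose CM field has discriminant $-p$. Since Mordell curves over $\mathbb{Q}$ have CM by the order of discriminant $-3$ (and conductor $1$), they are ruled out of that mechanism entirely: we again only have the options in Theorem \ref{enrique and najman}(ii) with $(D,f)=(3,1)$. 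For such a large prime $p$, in all the listed cases ($C_s^+(p)$, $C_{ns}^+(p)$, $G^3(p)$, $G_0(p)$) the smallest possible degree $[\mathbb{Q}(P_p):\mathbb{Q}]$ is at least $2(p-1) \geq 2 \cdot 18 = 36 > 3p$ whenever $p \geq 19$ — indeed $2(p-1) > 3p \iff -2 > p$ fails, so one must be slightly more careful: the bound is $2(p-1)$ versus $3p$, and $2(p-1) < 3p$ always, so I would instead argue via divisibility. For $p \equiv 1 \bmod 3$ (which covers $19, 43, 67$), Lemma \ref{two option} and Lemma \ref{G^3p, G_0p} give $[\mathbb{Q}(P_p):\mathbb{Q}] \in \{2(p-1), (p-1)^2, \tfrac{(p-1)^2}{2}, \tfrac{2(p-1)^2}{3}\}$; since this must divide $3p$ with $p$ prime, and $2(p-1)$ already exceeds $3$ and is coprime to $p$ for $p > 3$, it cannot divide $3p$ unless $2(p-1) \mid 3$, impossible. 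For $p = 163 \equiv 1 \bmod 9$, $G_E(p) \cong C_s^+(p)$ and the same conclusion holds. This disposes of all four large primes.

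Putting these together: the only $p \geq 5$ surviving is none, so combined with the trivially-available primes $2$ and $3$ we get $R_\mathbb{Q}^M(3p) = \{2,3\}$. I expect the main subtlety to be the bookkeeping for $p=2$ and $p=3$ themselves — one must exhibit a Mordell curve over $\mathbb{Q}$ with a point of order $2$ (resp.\ $3$) over a degree-$3p$ field, which is straightforward since such points already occur in degrees $1, 2, 3$ and one takes a compositum with a suitable auxiliary field of the right degree, exactly as in the constructions at the end of the proof of Theorem \ref{2p} — and double-checking, using Theorem \ref{enrique and najman}(i) and (ii)(e) together with \cite[Table 1]{enrique-najman}, that no \emph{other} small prime (e.g.\ that a point of order $2$ or $3$ forces a degree incompatible with $3p$) sneaks in; but since $\{2,3\} \subseteq R_\mathbb{Q}(1) \subseteq R_\mathbb{Q}^M(3p)$ this direction is immediate. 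The genuinely load-bearing step is the divisibility argument ruling out $19, 43, 67, 163$, which hinges crucially on the fact that Mordell curves have CM discriminant exactly $-3$ and hence cannot exploit the large-discriminant exceptional cases of \cite[Theorem 5.8]{enrique-najman}.
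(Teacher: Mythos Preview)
Your proposal is correct and follows essentially the same route as the paper: for each candidate prime $q \in \{5,7,19,43,67,163\}$ you invoke Theorem~\ref{enrique and najman} (with $(D,f)=(3,1)$, since Mordell curves have $j=0$) to pin down $G_E(q)$, then use Lemmas~\ref{C_ns^+}, \ref{two option}, \ref{G^3p, G_0p} to see that $[\mathbb{Q}(P_q):\mathbb{Q}]$ is always even, contradicting divisibility of the odd integer $3p$. The paper does exactly this, only more tersely and with a cleaner case split ($\{7,43,67\}$ via $C_s^+$ or $G^3$; $\{5\}$ via $C_{ns}^+$ or $G_0$; $\{19,163\}$ via $C_s^+$), and it simply notes ``even'' rather than ``divisible by $4$'' --- which is all that is needed since $3p$ is odd.

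Two small points of presentation: your detour through a size comparison $2(p-1)$ versus $3p$ and the heuristic about class-number-one discriminants are unnecessary --- the parity argument alone already kills all four large primes. Also, for $p\in\{19,43,67\}$ you only explicitly argue that $2(p-1)\nmid 3p$, leaving the other listed degrees $(p-1)^2$, $\tfrac{(p-1)^2}{2}$, $\tfrac{2(p-1)^2}{3}$ implicit; you should state (as you did for $p=5,7$) that each of these is even, hence cannot divide $3p$.
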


\begin{proof}
 Let $E/\mathbb{Q}$ be a Mordell curve and let $P_{p} \in E(K)$ be a point of order $p$. 
\begin{itemize}
    \item \textbf{$p \in \{7, 43, 67 \}$:} By Theorem \ref{enrique and najman}, we get that $G_E(p)$ is conjugate in $ \GL_2(\mathbb{F}_p)$ to $C_s^+(p)$ or to the subgroup $ G^3(p) = \{ D (a, a b^3), T \cdot D(a, ab^3) : a, b \in \mathbb{F}_p^ \times \} \subseteq C_s^+ (p). $
    In any case, by Lemma \ref{C_ns^+} and Lemma \ref{G^3p, G_0p} we have that $[\mathbb{Q}(P_{p}):\mathbb{Q}]$ is even. This is impossible because $[\mathbb{Q}(P):\mathbb{Q}]$ must divide $[K:\mathbb{Q}]=3p$.
     \item \textbf{$p=5$:} By Theorem \ref{enrique and najman}(iii)(d), we get that $G_E(5)$ is either conjugate to $C^+_{ns}(5)$ or to the subgroup $G_0(5)$ of $C^+_{ns}(5)$. In any case, by Lemma \ref{C_ns^+} and Lemma \ref{G^3p, G_0p} we have that $[\mathbb{Q}(P_{5}):\mathbb{Q}]$ is even, which is impossible. 
     \item \textbf{$p \in \{ 19, 163 \}$:} By Theorem \ref{enrique and najman} (iii)(a), we get $G_E(p)$ is conjugate to $ C_s^+(p)$. Thus, using Lemma \ref{C_ns^+}, we can see that $[\mathbb{Q}(P_{p}):\mathbb{Q}]$ is always divisible by $4$, which is not possible.
 \end{itemize}
\end{proof}

Assume that $C_{m} \oplus C_{m} \subseteq E(K)_{\text{tors}}$. By the properties of the Weil pairing it follows that $\mathbb{Q}(\zeta_{m}) \subseteq K$, so $\phi(m)=[\mathbb{Q}(\zeta_{m}):\mathbb{Q}]$ divides $[K:\mathbb{Q}]=3p$. It follows that $m \in \{ 1,2 \}.$
\newline
It order to complete the proof of Theorem \ref{3p}, first we will show that $E(K)_{\text{tors}}$ cannot contain a subgroup isomorphic to one of the following:
\[C_{4}, \; C_{18}, \;C_{27}, \; C_{2} \oplus C_{2}. \]

\begin{proof}[Proof of Theorem \ref{3p}:] \textcolor{white}{:} \\

\begin{itemize}
    \item Assume that $P_{4} = (x,y) \in E(K)$ is a point of order $4$. Then $E(K)_{\text{tors}}$ has an element of order $2$, which forces $c$ to be a cube, so $c=a^3$ for some $a \in K$. Now we observe that  $y(2P) = 0 \Longleftrightarrow (x(2P))^3 + a^3 = 0$. By \cite[page-105] {silverman}, we know that $x(2P) = \frac{x(x^3-8c)}{4(x^3+c)}$. Using this, we obtain  $ x^6 + 20a^3x^3 -8a^6 = 0 \Longleftrightarrow x^3 = -10a^3 \pm 6a^3\sqrt{3}$. Since $a \in K$, we see that $\sqrt{3} \in K$, which is a contradiction as $K$ is a number field of odd degree. This concludes that $E(K)_{\text{tors}} \not\cong C_4$.
    
    \smallskip
    
    \item  Assume that $P_{18} \in E(K)$ is a point of order $18$. By \cite[Lemma 4.9.]{deyroy}, we get that $9$ divides $[K: \mathbb{Q}]=3p$ which is not possible because $p \ge 5$.
    
    \smallskip
    
    \item Assume that $P_{27} \in E(K)$ is a point of order $27$. Let $\ell=3$ and $n=3$. By \cite[Theorem 1.2]{bourdonpollack}, we have that $\delta = 2$ and  $\mathbb{Z}/ 27 \mathbb{Z}=\mathbb{Z}/\ell^{n}\mathbb{Z}$ appears as the torsion subgroup of $CM$ elliptic curve over an odd degree number field $K$ such that $[K:\mathbb{Q}]=d$ if and only if $d$ is a multiple of $9$. Therefore we conclude that this case is also impossible.
    
    \item Assume that $C_{2} \oplus C_{2} \subseteq E(K)_{\text{tors}}$. By \cite[Proposition 1.15]{zywina}, we have $G_{E}(2)$ is conjugate to $\GL_{2}(\mathbb{F}_{2})$ or $B(2)$. It follows that $|G_{E}(2)| \in \{ 2,6\}$. But since $|G_{E}(2)|=[\mathbb{Q}(E[2]):\mathbb{Q}]$ divides $ [K:\mathbb{Q}]=3p$, we arrive at the contradiction.
\end{itemize}
Now, for each group $G \in \Phi_{\mathbb{Q}}^{M}(3)$, we need to show that it is contained in $\Phi_{\mathbb{Q}}^{M}(3p)$. We do this in the same manner as we did in the proof of Theorem \ref{2p}.

\begin{itemize}
    \item Consider the elliptic curve $E: \; y^2=x^3-108$. Let $L$ be a number field such that $[L:\mathbb{Q}]=3p$ and $L \cap \mathbb{Q}(E[2],E[3])=\mathbb{Q}$. It follows that $E(L)$ does not contain a point of order $2$ nor a point of order $3$. By what we have previously shown, we get $E(L)_{tors} \cong \mathcal{O}$.
        \item Let $ E: y^2 =x^3 +27 $ be an elliptic curve over $\mathbb{Q}$. We choose a cubic number field $K$ such that  $ 4 \cdot 27$ and $108$ are not cubes in $K$. Then, by \cite[Theorem 2.1]{deyroy}, we know that $ E(K)_{\text{tors}} \cong C_2$. Let $L$ be a number field of degree $3p$ that contains $K$. As we have shown earlier, $ E(L)_{\text{tors}} \subseteq \{ \mathcal{O}, C_2, C_3, C_6, C_9 \}$. Thus we can conclude $ E(L)_{\text{tors}} \subseteq \{ C_2, C_6 \}$. If $ E(L)_{\text{tors}} \cong C_6$, then $E(L)_{\text{tors}}$ has a point $P_{3}=(x,y)$ of order $3$.  By \cite[page-105]{silverman}, the $3$rd-division polynomial of $E$ has two factors, namely $x$ and $x^3+108$. By the the construction of $K$, $x^3 +108$ remains irreducible over $L$. Thus we conclude $ E(L)_{\text{tors}} \cong C_2$.
            \item Consider an elliptic curve $E: y^2 =x^3 +4$. We have $ E(\mathbb{Q})_{\text{tors}} \cong C_3$. Let $ L$ be a number field such that $ [L : \mathbb{Q} ] =3p$ and $L \cap \mathbb{Q}(E[2]) = \mathbb{Q}$. This immediately gives us that $E(L)_{tors} \neq C_{6}$. In order to conclude that $E(L)_{tors} = C_{3}$, it remains to show that $E(L)_{tors} \neq C_{9}$.
Consider the $9$th primitive division polynomial (i.e. a polynomial whose roots are precisely the $x$-coordinates of points of order $9$ on $E$) $f_{E,9}(x)$ of $E$. The irreducible factors of $f_{E,9}(x)$ are of degrees $9$ and $27$. Since $[L : \mathbb{Q}]=3p$, we know that these irreducible factors cannot have a root in $L$. Hence, $E (L)_{\text{tors}} \not\cong C_9$. Thus we conclude that $ E(L)_{\text{tors}} \cong C_3$. 
        
    \item Consider the elliptic curves $E_{1}: \; y^2=x^3+16$, $E_{2}: \; y^2=x^3+1$
and a number field $M:= \mathbb{Q}(x_{0})$, where $x_{0}$ is a root of the irreducible polynomial $x^3 -3x^2 +1 =0$. We have $E_{1}(M)_{tors} \cong C_{9}$ and $E_{2}(M)_{tors} \cong C_{6}$. Let $L$ be a number field containing $M$ such that $[L:\mathbb{Q}]=3p$. We have previously shown that $E_{1}(L)_{tors}$ and $E_{2}(L)_{tors}$ are both contained in the set $\{ \mathcal{O}, C_2, C_3, C_6, C_9 \}$. Therefore, we conclude that $E_{1}(L)_{tors} \cong C_{9}$ and $E_{2}(L)_{tors} \cong C_{6}$.

\end{itemize}
\end{proof}

\begin{acknowledgement*} The first author gratefully acknowledges support
from the QuantiXLie Center of Excellence, a project co-financed by the Croatian Government and European Union through the
European Regional Development Fund - the Competitiveness and Cohesion Operational Programme (Grant KK.01.1.1.01.0004) and
by the Croatian Science Foundation under the project no. IP-2018-01-1313. The second author would  like to sincerely thank the Department of Mathematics, University of Zagreb where the project was initiated. The second author  thanks also IMPAN for providing appropriate support to conclude this project. 
\end{acknowledgement*}


\begin{thebibliography}{9999}
\bibitem{Magma} Wieb Bosma, John Cannon, and Catherine Playoust, The Magma algebra system. I. The user language, J. Symbolic Comput., 24 (1997), 235–265.
\bibitem{bc}
A. Bourdon and P. L. Clark, {\it Torsion points and Galois representations on CM elliptic curves}, Pacific J. Math. 305 (2020), no. 1, 43-88.
\bibitem{bc1}
A. Bourdon and P. L. Clark, {\it Torsion points and isogenies on CM elliptic curves}, J. London Math. Soc. 102 (2020), no. 2, 580-622.
\bibitem{bourdonpollack}
A. Bourdon and P. Pollack, {\it Torsion subgroups of CM elliptic curves over odd degree number fields,} Math. Res. Not. IMRN (2017), no. 16, 4923-4961.
\bibitem{ccrs}
P. L. Clark, P. Corn, A. Rice and J. Stankewicz, {\it Computation on elliptic curves with complex multiplication,} LMS J. Comput. Math. 17 (1) (2014), 509-535.

\bibitem{jd}
H. B. Daniels and E. Gonz\'{a}lez-Jim\'{e}nez, {\it On the torsion of rational elliptic curves over sextic fields,} Math. Comp.  89 (321) (2020), 411-435.

\bibitem{hoeij}
M. Derickx, A. Etropolski, M. V. Hoeij, J. S. Morrow and D. Zureick-Brown, {\it Sporadic Cubic Torsion, } Accepted for publication in Algebra and Number Theory



\bibitem{dey2}
P. K. Dey, {\it Torsion groups of a family of elliptic curves over number fields,} Czechoslovak Math. J. 69 (144) (1) (2019), 161-171.

\bibitem{deyroy}
P. K. Dey and B. Roy, {\it Torsion groups of Mordell curves over cubic and sextic fields,} Pub. Math Deb (to appear).
\bibitem{fswz}
 G. Fung, H. Str{\"o}her, H. Williams, and H. Zimmer. {\it Torsion groups of elliptic curves with integral j-invariant over
pure cubic fields}, J. Number Theory 36 (1) (1990), 12-45. 

\bibitem{enrique}
E. Gonz\'{a}lez-Jim\'{e}nez, {\it Complete classification of the torsion structures of rational elliptic curves over quintic number fields,} J. Algebra. 478 (2017), 484-505.

\bibitem{enrique1}
E. Gonz\'{a}lez-Jim\'{e}nez, {\it Torsion growth over cubic fields of rational elliptic curves with complex multiplication,} Publicationes Mathematicae Debrecen 97/1-2, 63-76 (2020)

\bibitem{enrique-najman}
E. Gonz\'{a}lez-Jim\'{e}nez and F. Najman, {\it Growth of torsion groups of elliptic curves upon base change,} Math. Comp. 89 (323) (2020), 1457-1485.

\bibitem{tom}
T. Gužvić, { \it Torsion growth of rational elliptic curves in sextic number fields,} J. Number Theory 220 (2021), 330-345

\bibitem{tguzvic1}
T. Gužvić, {\it Torsion of elliptic curves with rational $j$-invariant defined over number fields of prime degree,} Accepted for publication in Proceedings of the AMS.


\bibitem{kamienny}
S. Kamienny, {\it Torsion points on elliptic curves and $q$-coefficients of modular forms, }Invent. Math. 109 (2) (1992), 221-229.
\bibitem{kenku-momose}
M. A. Kenku and F. Momose, {\it Torsion points on elliptic curves defined over quadratic fields,} Nagoya Math. J. 109 (1988), 125-149. 
\bibitem{kna}
A. W. Knapp, {\it Elliptic Curves,} Mathematical Notes, Vol. 40, Princeton Univ. Press, Princeton, 1992.

\bibitem{lr}
A. Lozano-Robledo, {\it Galois representations attached to elliptic curves with complex multiplications}, arXiv: 1809.02584. 
\bibitem{mazur77}
B. Mazur, {\it Modular curves and the Eisenstein ideal,} Inst. Hautes \'{E}tudes Sci. Publ. Math. 47 (1977), 33-186. 




\bibitem{muller}
H. H. M{\" u}ller, H. Str{\" o}her, and H. G. Zimmer, {\it Torsion groups of elliptic curves with integral
$j$-invariant over quadratic fields,} J. Reine Angew. Math. 397 (1989), 100–161. 
\bibitem{najman} 
F. Najman, {\it Torsion of rational elliptic curves over cubic fields and sporadic points on $X_1(n)$,} Math. Res. Letters. 23 (1) (2016), 245-272.

\bibitem{ols}
L. D. Olson, {\it Points of finite order on elliptic curves with complex multiplication,} Manuscripta math. 14 (1974), 195-205.
\bibitem{pwz}
A. Peth{\H o}, T. Weis, and H. Zimmer, {\it Torsion groups of elliptic curves with integral j-invariant over general cubic
number fields,} Int. J. Algebra Comput. 7 (1997) 353-413.
\bibitem{silverman}

J. H. Silverman, {\it  The arithmetic of elliptic curves,} Graduate Texts in Mathematics, 106. Springer-Verlag, New York, 1986.
\bibitem{zywina}
D. J. Zywina, {\it On the possible images of the mod $\ell$ representations associated to elliptic curves over $\mathbb{Q}$}, \href{https://arxiv.org/abs/1508.07660}{arxiv:1508.07660}




\end{thebibliography}
\end{document}